\DeclareRobustCommand{\intprod}{%
  \mathbin{\mathpalette\int@prod{(0.1,0)(0.9,0)(0.9,0.8)}}%
}
\DeclareRobustCommand{\intprodr}{%
  \mathbin{\mathpalette\int@prod{(0.1,0.8)(0.1,0)(0.9,0)}}}
\newcommand{\int@prod}[2]{%
  \begingroup
  \sbox\z@{$\m@th#1+$}%
  \setlength\unitlength{\wd\z@}%
  \begin{picture}(1,1)
  \roundcap
  \polyline#2
  \end{picture}%
  \endgroup
}
\newcommand{\sumprime}{\if@display\sideset{}{'}\sum%
            \else\sum'\fi}
\begin{document}

\numberwithin{equation}{section}

% define theorem environments
\newtheorem{theorem}{Theorem}[section]
\newtheorem{prop}[theorem]{Proposition}
\newtheorem{conjecture}[theorem]{Conjecture}
\def\theconjecture{\unskip}
\newtheorem{corollary}[theorem]{Corollary}
\newtheorem{lemma}[theorem]{Lemma}
\newtheorem{observation}[theorem]{Observation}
\newtheorem{definition}{Definition}
\newtheorem*{definition*}{Definition}
\numberwithin{definition}{section} %\def\thedefinition{\unskip}
\newtheorem{remark}{Remark}
\newtheorem*{note}{Note}
\def\theremark{\unskip}
\newtheorem{kl}{Key Lemma}
\def\thekl{\unskip}
\newtheorem{question}{Question}
\def\thequestion{\unskip}
\newtheorem*{example}{Example}
\newtheorem{problem}{Problem}

\thanks{Supported by National Natural Science Foundation of China, No. 11771089}

\title[Log-Hyperconvexity Index and Bergman Kernel]{Log-Hyperconvexity Index and Bergman Kernel}

 \author[Bo-Yong Chen]{Bo-Yong Chen}
 \author[Zhiyuan Zheng]{Zhiyuan Zheng}
\date{2022. 6. 16}

\address[Bo-Yong Chen]{School of Mathematical Sciences, Fudan University, Shanghai, 20043, China}

\email{boychen@fudan.edu.cn}

\address[Zhiyuan Zheng]{School of Mathematical Sciences, Fudan University, Shanghai, 20043, China}

\email{19110180018@fudan.edu.cn}

\begin{abstract}
We obtain a quantitative estimate of Bergman distance when $\Omega \subset \mathbb{C}^n$ is a bounded domain with log-hyperconvexity index $\alpha_l(\Omega)>\frac{n-1+\sqrt{(n-1)(n+3)}}{2}$, as well as the $A^2(\log A)^q$-integrability of the Bergman kernel $K_{\Omega}(\cdot, w)$ when $\alpha_l(\Omega)>0$.

\bigskip
\noindent{{\sc Mathematics Subject Classification} (2020): 32A25, 32T35, 32U35.}

\smallskip
\noindent{{\sc Keywords}: Bergman kernel, Bergman distance, log-hyperconvexity index.}

\end{abstract}

\maketitle

\section{Introduction}

We say a domain $\Omega \subset \mathbb{C}^n$ is hyperconvex if there exists a continuous negative plurisubharmonic (psh) exhaustion function $\rho$ on $\Omega$, i.e., $\{ \rho < c  \} \Subset \Omega$ for every $c<0$. 
It is one of the core concepts in the function theory of several complex variables, which can be traced back to 1974, when Stehl\'e \cite{Stehle1974} first proposed it in order to study the Serre problem. Clearly, hyperconvexity implies pseudoconvexity, but the converse fails. Thus it is interesting to ask when a pseudoconvex domain is hyperconvex.
A large literature of positive results exists (see, e.g., \cite{ChenHolder,Demailly1987,DiederichFornaess1975,DiederichFornaess1977,Harrington2007,AHP2014,KerzmanRosay1981,OhsawaSibony1998}).   
 Among them, the weakest regularity assumption on the boundary is that the boundary can be written locally as the graph of a H\"older continuous function  (cf. \cite{ChenHolder}).

The quantitative characterization of hyperconvexity starts from the seminal work of Diederich-Fornaess \cite{DiederichFornaess1977}, that for every bounded pseudoconvex domain $\Omega$ with $C^2$- boundary one has
$$
\eta(\Omega) := \sup \{ \eta \geq 0; \exists \hspace{0.1cm} \rho \in C(\Omega)\cap PSH^{-}(\Omega) \text{ s.t.} -\rho \asymp \delta^{\eta} \}>0, 
$$ 
where $\delta$ denotes the boundary distance and $PSH^{-}(\Omega)$ denotes the set of negative psh functions on $\Omega$. The quantity $\eta(\Omega)$ is also called the Diederich-Fornaess index (D-F index) of $\Omega$, and it has been studied by a number of authors (see, e.g.,  \cite{BerndtssonCharpentier2000,DiederichFornaess1977,FornaessHerbig2008,FuShaw2014,Harrington2007,Liu2019,OhsawaSibony1998,Sibony1987}). In 2017, the first author \cite{Chen2017} introduced a related concept, the so-called hyperconvexity index of a bounded domain $\Omega$, defined as
$$
\alpha(\Omega):= \sup \{ \alpha \geq 0 ; \exists \hspace{0.1cm} \rho \in C(\Omega)\cap PSH^{-}(\Omega) \text{ s.t.} -\rho \lesssim \delta^{\alpha} \}. 
$$
Obviously, $\alpha(\Omega) \geq \eta(\Omega)$. These concepts have numerous applications in the study of the Bergman kernel and metric.
%There are plenty of domains with very irregular boundaries such that $\alpha(\Omega)>0$, while it is difficult to verify $\eta(\Omega) >0$. 
%The boundary of a domain $\Omega$ with $\alpha(\Omega)>0$ can be very irregular, while the information of the D-F index is difficult to be mined. 
%For instance, if $\Omega$ is a simply connected domain, Koebe's distortion theorem implies $\alpha(\Omega) \geq \frac{1}{2}$  \cite{CarlesonGamelin1993}. Carleson and Totik \cite{CarlesonTotik2004,Totik2006} studied the condition of the H\"older continuity at 0 for the Green function, obtained various Wiener-type criteria for planar domains, which provides a method of judging whether the hyperconvex index is positive. In particular, if $\partial \Omega$ is uniformly perfect in the sense of Pommerenke \cite{Pommerenke1979}, we must have $\alpha(\Omega) >0$. 

In this spirit, we introduce the following
\begin{definition}
For   a bounded domain $\Omega \subset \mathbb{C}^n$, we define
$$
\alpha_{l}(\Omega):=  \sup \{ \alpha \geq 0; \exists \hspace{0.1cm} \rho \in C(\Omega)\cap PSH^{-}(\Omega) \text{ s.t. }  -\rho \lesssim (-\log \delta)^{-\alpha}      \text{ near } \partial \Omega      \}
$$
as the log-hyperconvexity index of $\Omega$.
\end{definition}
%Clearly, $\alpha_{l}(\Omega) =+\infty$ if $\alpha(\Omega)>0$. 
This concept is motivated by the recent result of the first author \cite{Chen2017} that every bounded pseudoconvex H\"older domain \emph{locally has a positive log-hyperconvexity index}. It remains open whether every bounded pseudoconvex H\"older domain has a (global) positive log-hyperconvexity index (see \cite{Xiong2021} for some partial results). Note that there are various examples with $\alpha(\Omega)=0$ while $\alpha_l(\Omega)>0$ (see Appendix).

It is well-known that the growth of negative psh exhaustion functions relates closely to the estimate of the Bergman distance $d_B(z_0,z)$ from a fixed point $z_0$ to $z$.
 If $\Omega$ is hyperconvex, then $\lim_{z\to \partial \Omega} d_B(z_0, z)=\infty$ (cf.  \cite{BlockiPflug1998} or \cite{Herbort1999}). Diederich-Ohsawa \cite{DiederichOhsawa1995} obtained the quantitative estimate $$d_B(z_0,z)\gtrsim \log |\log \delta(z)|$$ for all $z$ sufficiently close to $\partial \Omega$ when there exists $\rho \in C(\Omega)\cap PSH^{-}(\Omega)$ s.t. $ \delta^\beta\lesssim -\rho\lesssim \delta^\alpha$, where $\beta\ge \alpha >0$.  In case $\alpha=\beta$, i.e., $\eta(\Omega)>0$, Blocki \cite{Blocki2004} improved the previous estimate to $$d_B(z_0,z)\gtrsim |\log\delta(z)|/\log |\log \delta(z)|.$$  In \cite{Chen2017}, the first author showed that  Blocki's estimate remains valid under the weaker condition $\alpha(\Omega)>0$.  Here we shall prove the following

\begin{theorem}\label{theorem1.3}
Let $\Omega$ be a bounded pseudoconvex domain in $\mathbb C^n$. If for every $p\in \partial \Omega$ there is an open neighborhood $D$ of $p$ such that 
$$
\alpha_l(\Omega \cap D)> \frac{n-1+\sqrt{(n-1)(n+3)}}{2},
$$ 
%for every open ball $D \subset \mathbb{C}$ with $D\cap \partial \Omega \neq \emptyset$. 
then
\begin{equation}\label{eq:1.1}
d_B(z_0,z) \gtrsim \log\log |\log \delta(z)|
\end{equation}
for all $z$ sufficiently close to $\partial \Omega$.
\end{theorem}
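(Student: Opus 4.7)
The plan is to reduce \eqref{eq:1.1} to a pointwise lower bound on the Bergman metric near the boundary and then integrate along a curve ending at $z$. By the standard localization principle for the Bergman metric, it suffices to work on $\Omega \cap D$ for a small neighborhood $D$ of a boundary point. On $\Omega \cap D$ we are given a negative psh function $\rho$ with $-\rho \lesssim (-\log \delta)^{-\alpha}$ for some fixed $\alpha > \alpha_0 := \frac{n-1+\sqrt{(n-1)(n+3)}}{2}$. Setting $\psi := -\log(-\rho)$ produces a psh function satisfying the Donnelly--Fefferman inequality $i\partial\bar\partial \psi \geq i\partial\psi \wedge \bar\partial \psi$, together with the growth $\psi(z) \geq \alpha \log|\log\delta(z)| - O(1)$ as $z \to \partial\Omega$.

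The core step will be a pointwise lower bound on the Bergman metric of the form
$$ds_B(z;v) \gtrsim \frac{|v|}{\delta(z)\,|\log\delta(z)|\,\log|\log\delta(z)|}$$
for $z$ sufficiently close to $\partial \Omega$. I would establish this using the Kobayashi-type extremal characterization of $ds_B^2$ in terms of $A^2(\Omega)$ functions vanishing at $z$, constructing the competing holomorphic function by solving a weighted $\bar\partial$-problem with weight $e^{-A\psi}$ (together with, if needed, an auxiliary twist of Berndtsson--Charpentier type), followed by an Ohsawa--Takegoshi extension from a complex line through $z$ transverse to $\partial\Omega$; here $A > 0$ is a parameter to be optimized. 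The matching upper bound on $K_\Omega(z,z)$ follows from a dual sub-mean-value argument with the same weight.

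The main obstacle lies in closing this $\bar\partial$-estimate. Using $i\partial\bar\partial\psi \geq i\partial\psi\wedge\bar\partial\psi$ together with the codimension-$(n-1)$ penalty intrinsic to the Ohsawa--Takegoshi step, the curvature requirement reduces to a quadratic condition whose solvability in the admissible range for $A$ is equivalent to $\alpha^2 > (n-1)(\alpha+1)$, which is precisely $\alpha > \alpha_0$. Once the metric bound is in place, integrating along a curve from $z_0$ to $z$ (say, approximately following $\nabla\delta$) yields
$$d_B(z_0, z) \gtrsim \int_{\delta(z)}^{\delta(z_0)} \frac{ds}{s\,|\log s|\,\log|\log s|} \asymp \log\log|\log \delta(z)|,$$
which is \eqref{eq:1.1}. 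Keeping the Ohsawa--Takegoshi constants sharp enough to extract the threshold $\alpha_0$ is what I anticipate to be the most delicate part.
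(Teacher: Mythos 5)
Your reduction by localization is the same first step as the paper's, and you have correctly identified the algebraic origin of the threshold: $\alpha^2>(n-1)(\alpha+1)$ is indeed equivalent to $\alpha>\frac{n-1+\sqrt{(n-1)(n+3)}}{2}$. However, the core of your argument --- the pointwise lower bound $ds_B(z;v)\gtrsim |v|\,\delta(z)^{-1}|\log\delta(z)|^{-1}(\log|\log\delta(z)|)^{-1}$ --- is a genuine gap, and I do not believe it can be closed. First, as stated (for all directions $v$) the bound is already false on the unit ball in $\mathbb{C}^n$, $n\ge 2$: in complex tangential directions $ds_B(z;v)\asymp|v|\,\delta(z)^{-1/2}$, which is far smaller than your claimed bound as $\delta(z)\to 0$. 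Second, even restricted to the normal component of $v$ (which is all the integration step really needs), a bound of order $\delta^{-1}$ up to logarithmic factors is out of reach under the hypothesis: the Donnelly--Fefferman weight $\psi=-\log(-\rho)$ carries no pole at $z$ and only localizes at the scale of the Green function sublevel set $A_\Omega(z,-1)$, and under log-hyperconvexity this set is only known to be contained in $\{\varrho<-C^{-1}(-\varrho(z))^{1+1/\alpha}\}$, a region reaching out to points whose boundary distance is about $\exp(-c|\log\delta(z)|^{\alpha/(\alpha+1)})$, vastly larger than $\delta(z)$. This is consistent with the literature: even under the much stronger hypothesis $\eta(\Omega)>0$, the known pointwise metric bounds are only of the form $\delta^{-c}$ with $c<1$, which does not even integrate to a divergent quantity --- which is precisely why Diederich--Ohsawa, Blocki, and the present paper all avoid pointwise metric estimates for results of this type.

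For comparison, the paper's mechanism is entirely different after the localization step. It uses Blocki's criterion that $d_B(w,w')\ge c_1>0$ whenever $\{g_\Omega(\cdot,w)\le-1\}$ and $\{g_\Omega(\cdot,w')\le-1\}$ are disjoint, and then counts how many pairwise separated points can be placed along a Bergman geodesic from $z_0$ to $z$. The quantitative input is a two-sided localization $A_\Omega(w,-1)\subset\{-C\nu(w)<\varrho<-C^{-1}\mu(w)\}$, where $\varrho$ is the relative extremal function of a fixed ball, $\mu(w)=(-\varrho(w))^{1+1/\alpha}$ and $\nu(w)=(-\varrho(w))^{\gamma}$ with $\gamma=\left(\frac1n-\frac1\beta\left(1-\frac1n\right)\right)\frac{n\alpha}{1+\alpha}$. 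The lower inclusion is proved via the quasi-symmetry inequality $|g_\Omega(\tilde\zeta,w)|^n\le n!(\log R/\varepsilon)^{n-1}|g_\Omega(w,\zeta)|$; the $(n-1)$ enters there, not through an Ohsawa--Takegoshi codimension penalty, and the condition $\alpha>\alpha_0$ is exactly what makes the interval $n-1<\beta<\alpha^2/(\alpha+1)$ nonempty, so that $\gamma>0$ and the recursion $C^{-1}\mu(z_k)=C\nu(z_{k+1})$ on exponents can be iterated $m\asymp\log|\log(-\varrho(z))|\gtrsim\log\log|\log\delta(z)|$ times. If you want to salvage your $\bar\partial$-theoretic viewpoint, the place it can genuinely enter is in proving Blocki's separation criterion itself, not in a pointwise metric bound.
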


\begin{problem}
Does the condition $\alpha_l(\Omega)>0$ imply $(\ref{eq:1.1})$?
\end{problem}
\begin{remark}Theorem \ref{theorem1.3} shows that the answer to this problem is affirmative when $n=1$. %Note that Blocki-Zwonek \cite{BlockiZwonek2018} gave some estimate of the Bergman kernel and metric  for planar domains using logarithmic capacity, and Jucha \cite{JuchaPiotr2004} studied the Bergman completeness of Zalcman type domains.
\end{remark}
Theorem \ref{theorem1.3} combined with the main result (and its proof) in \cite{ChenHolder} gives

\begin{corollary}\label{corollary1.1}
Let $\Omega$ be a bounded pseudoconvex domain in $\mathbb C^n$ such that $\partial \Omega$ can be written locally as the graph of a H\"older continuous function of order $\gamma\ge \frac{\sqrt{(n-1)(n+3)}-n+1}{2}$. Then $(\ref{eq:1.1})$ holds.
\end{corollary}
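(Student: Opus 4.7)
The plan is to deduce the corollary directly from Theorem \ref{theorem1.3} by verifying its hypothesis for H\"older domains. Namely, it suffices to show that for every $p\in\partial\Omega$ there is a neighborhood $D$ of $p$ such that
\[
\alpha_l(\Omega\cap D) \;>\; \frac{n-1+\sqrt{(n-1)(n+3)}}{2}.
\]
The threshold on $\gamma$ stated in the corollary is calibrated precisely for this: the equivalence
\[
\gamma \;\geq\; \tfrac{\sqrt{(n-1)(n+3)}-n+1}{2} \quad\Longleftrightarrow\quad \gamma + (n-1) \;\geq\; \tfrac{n-1+\sqrt{(n-1)(n+3)}}{2}
\]
shows that the quantitative input needed from \cite{ChenHolder} is the local bound $\alpha_l(\Omega\cap D)\geq (n-1)+\gamma$. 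The published statement of \cite{ChenHolder} only records the qualitative inequality $\alpha_l(\Omega\cap D)>0$; the wording of the corollary (``the main result \emph{and its proof} in \cite{ChenHolder}'') signals that the extra input is exactly this quantitative exponent, which must be extracted from the construction rather than from the statement.

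To obtain $\alpha_l(\Omega\cap D)\geq (n-1)+\gamma$, I would unpack the local construction of the negative psh exhaustion function in \cite{ChenHolder}. In suitable boundary coordinates one writes $\Omega\cap D=\{y_n>\varphi(z',x_n)\}$ with $\varphi\in C^{0,\gamma}$, and the construction produces a continuous negative psh function $\rho$ on $\Omega\cap D$ by regularizing the H\"older graph and correcting for pseudoconvexity. The H\"older regularity of $\varphi$ contributes a $\gamma$-dependent gain at each regularization scale, while the pseudoconvexity correction in the $n-1$ transverse complex directions costs a dimensional factor that accumulates to $(n-1)$. Balancing these contributions, as in \cite{ChenHolder} but tracking the resulting exponent rather than only its positivity, yields
\[
-\rho \;\lesssim\; (-\log\delta)^{-((n-1)+\gamma)} \qquad\text{near } \partial\Omega,
\]
whence $\alpha_l(\Omega\cap D)\geq (n-1)+\gamma$. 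A routine perturbation of the exponent — or the fact that the construction actually produces a one-parameter family of exhaustion functions with exponents approaching this bound — upgrades $\geq$ to the strict inequality required by Theorem \ref{theorem1.3}, after which (\ref{eq:1.1}) is immediate.

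The main obstacle is therefore the bookkeeping inside the construction of \cite{ChenHolder}, making the dimensional dependence explicit; nothing new is needed on the Bergman-kernel side. As a consistency check, when $n=1$ the correction $(n-1)$ disappears, the threshold on $\gamma$ becomes $0$, and the corollary recovers — in agreement with the Remark following Theorem \ref{theorem1.3} — the statement that every bounded planar H\"older domain of positive H\"older order satisfies (\ref{eq:1.1}).
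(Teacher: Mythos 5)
Your approach is exactly what the paper intends: the paper offers no explicit proof of this corollary, deferring entirely to ``Theorem~\ref{theorem1.3} combined with the main result (and its proof) in \cite{ChenHolder},'' and your reverse-engineering of the needed quantitative input is the only sensible way to fill that in. The arithmetic
\[
\gamma \ge \tfrac{\sqrt{(n-1)(n+3)}-n+1}{2} \iff \gamma+(n-1) \ge \tfrac{n-1+\sqrt{(n-1)(n+3)}}{2}
\]
is correct, and it correctly pins down that the quantitative content to be extracted from \cite{ChenHolder} must be $\alpha_l(\Omega\cap D)\ge (n-1)+\gamma$ for a local Hölder graph of order $\gamma$. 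The consistency check at $n=1$ is also right and matches the paper's Remark.

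Two caveats, both of which you partly flag yourself. First, your account of \emph{why} the exponent is $(n-1)+\gamma$ (``$\gamma$-dependent gain at each regularization scale'' versus ``pseudoconvexity correction in the $n-1$ transverse directions'') is a plausible heuristic but not a verification; it would have to be confirmed against the actual construction in \cite{ChenHolder}, and the paper explicitly says the \emph{proof} there is needed, not just the statement. Second, the $\geq$ versus $>$ mismatch is genuine: Theorem~\ref{theorem1.3} requires the strict inequality $\alpha_l(\Omega\cap D)>\frac{n-1+\sqrt{(n-1)(n+3)}}{2}$ (and this strictness is used in Proposition~\ref{proposition3.1} to guarantee $\tfrac{\alpha^2}{\alpha+1}>n-1$ for some admissible $\alpha<\alpha_l$), whereas the corollary allows the boundary case $\gamma=\frac{\sqrt{(n-1)(n+3)}-n+1}{2}$. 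If \cite{ChenHolder} yields only $\alpha_l(\Omega\cap D)\ge (n-1)+\gamma$ with no strict gain, the borderline case would need a separate argument; your suggested fix (a one-parameter family of exhaustion functions pushing the exponent strictly past $(n-1)+\gamma$) is reasonable but would again need to be checked against \cite{ChenHolder}. This is arguably a small imprecision in the corollary's statement as much as in your proof.
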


Note that positivity of $\alpha(\Omega)$ can be used to obtain the $L^p$-integrability of $K_{\Omega}(\cdot, w)$ for fixed $w$ when $2<p<2+\frac{2\alpha(\Omega)}{2n-\alpha(\Omega)}$ (cf. \cite{Chen2017}). Analogously, we have

\begin{theorem} \label{theorem1.2}
Let $\Omega$ be a pseudoconvex domain in  $\mathbb{C}^n$  with $\alpha_{l}(\Omega)>0$. For fixed $w\in \Omega$, we have 
$$K_{\Omega}(\cdot,w) \in A^2(\log A)^q(\Omega):=\left\{ f\in \mathcal{O}(\Omega); \int_{\Omega}|f|^2(\log^{+} |f|)^q \mathrm{d}\lambda < \infty  \right\}$$
 for all $0<q<\alpha_{l}(\Omega)$, where $\log^{+}t=\max\{ \log t,0 \}$ and $\mathrm{d}\lambda$ denotes the Lebesgue measure.
\end{theorem}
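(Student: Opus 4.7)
The plan is to reduce the logarithmic integrability to a power-weighted $L^2$ estimate and then invoke a Donnelly-Fefferman-type weighted $\bar\partial$ inequality. Cauchy-Schwarz together with the submean value bound $K(z,z) \le C\delta(z)^{-2n}$ gives $\log^+|K(z,w)| \lesssim 1 + |\log\delta(z)|$. For any $q \in (0, \alpha_l(\Omega))$, fix $q' \in (q, \alpha_l(\Omega))$ and pick $\rho \in C(\Omega) \cap PSH^{-}(\Omega)$ with $-\rho \lesssim (-\log\delta)^{-q'}$ near $\partial\Omega$; then $(\log^+|K(z,w)|)^q \lesssim 1 + (-\rho(z))^{-s}$ with $s := q/q' \in (0,1)$. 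Since $\int_\Omega |K(\cdot,w)|^2 d\lambda = K(w,w) < \infty$, the theorem reduces to
\[
\int_\Omega |K(z,w)|^2 (-\rho(z))^{-s} \, d\lambda(z) < \infty \qquad \text{for every } s \in (0,1).
\]

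Now set $\phi := -\log(-\rho)$, a continuous psh function on $\Omega$ with $\phi \to +\infty$ at $\partial\Omega$. A direct computation yields the Donnelly-Fefferman curvature estimate
\[
i\partial\bar\partial \phi - i\partial\phi \wedge \bar\partial\phi \,=\, \frac{i\partial\bar\partial\rho}{-\rho} \,\ge\, 0,
\]
so $|\bar\partial\phi|^2_{i\partial\bar\partial\phi}^{-1} \le 1$ pointwise. The target becomes $\int |K(\cdot,w)|^2 e^{s\phi}\,d\lambda < \infty$ for $s < 1$, a weighted $L^2$ bound with a ``wrong-sign'' weight $e^{s\phi}$ which grows at $\partial\Omega$. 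Such a weight is forbidden by classical H\"ormander, but under the above DF curvature condition the Berndtsson-type weighted $L^2$-theory for $\bar\partial$ delivers the estimate precisely in the range $s < 1$; the degeneration of the Berndtsson constant at $s = 1$ matches exactly the constraint $q < \alpha_l(\Omega)$.

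The concrete execution is a cutoff argument on an exhausting sequence $\Omega_j \Subset \Omega$. For $T$ large, set $\chi_T(z) := \chi(\phi(z) - T)$ where $\chi \in C^\infty(\mathbb{R})$ satisfies $\chi|_{(-\infty,0]} \equiv 1$, $\chi|_{[1,\infty)} \equiv 0$, $|\chi'| \le 2$. The form $g_T := \bar\partial(\chi_T K(\cdot,w)) = \chi'(\phi - T)\, \bar\partial\phi \cdot K(\cdot,w)$ is supported in the annulus $\{T<\phi<T+1\}$ with $|g_T|^2_{i\partial\bar\partial\phi}^{-1} \le 4 |K(\cdot,w)|^2$. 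The Donnelly-Fefferman estimate provides a solution $u_T$ of $\bar\partial u_T = g_T$ with
\[
\int_{\Omega_j} |u_T|^2 e^{s\phi}\,d\lambda \le \frac{C_s}{1-s} \int_{\{T<\phi<T+1\}} |K(\cdot,w)|^2 e^{s\phi} \,d\lambda.
\]
Then $h_T := \chi_T K(\cdot,w) - u_T$ is holomorphic, and writing $K(\cdot,w) = h_T + u_T + (1-\chi_T)K(\cdot,w)$ and using the reproducing property together with $L^2$-orthogonality $\int u_T \overline{K(\cdot,w)}\,d\lambda = 0$ of the canonical solution, one sets up a bootstrap for $\int|K(\cdot,w)|^2 e^{s\phi} d\lambda$ on sublevel sets of $\phi$; letting $T \to \infty$ and $j \to \infty$ yields the desired bound. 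The main obstacle is closing this bootstrap: one needs the gain factor coming from Berndtsson's inequality to be strictly less than $1$, which is precisely where the strict inequality $s < 1$ is essential; reconciling the weighted $L^2(e^{s\phi})$ estimate on $u_T$ with the unweighted $L^2$-orthogonality used in the reproducing identity is the main technical point, handled by adjusting $u_T$ by a holomorphic correction whose weighted norm is controlled on the subdomains $\Omega_j$.
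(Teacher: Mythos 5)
Your reduction is sound and rests on the same mechanism as the paper's proof: combining $-\rho\lesssim(-\log\delta)^{-q'}$ with the crude bound $\log^{+}|K_{\Omega}(z,w)|\lesssim 1+|\log\delta(z)|$, the theorem follows once one knows
\[
\int_{\Omega}|K_{\Omega}(\cdot,w)|^{2}(-\rho)^{-s}\,\mathrm{d}\lambda<\infty\qquad\text{for every }s\in(0,1),
\]
and by a dyadic decomposition in the values of $-\rho$ this is equivalent to the sublevel-set decay $\int_{\{-\rho\le\varepsilon\}}|K_{\Omega}(\cdot,w)|^{2}\,\mathrm{d}\lambda\lesssim_{r}\varepsilon^{r}$ for all $r<1$, which the paper simply imports from \cite{Chen2017} (Proposition 3.2 in the text) and then sums over the shells $\{2^{-k-1}\le(-\log\delta)^{-\alpha}<2^{-k}\}$. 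Up to that point you and the paper coincide; the difference is that you attempt to prove the key weighted estimate rather than cite it.

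The gap is in that attempted proof. Your $\bar\partial$-datum $g_{T}=\chi'(\phi-T)\,\bar\partial\phi\cdot K_{\Omega}(\cdot,w)$ is supported on the shell $\{T<\phi<T+1\}$ near $\partial\Omega$, so the Donnelly--Fefferman bound you invoke controls $\int|u_{T}|^{2}e^{s\phi}$ by $\int_{\{T<\phi<T+1\}}|K_{\Omega}(\cdot,w)|^{2}e^{s\phi}$ --- a piece of the very quantity whose finiteness is at stake. Without an a priori decay rate for $\int_{\{\phi>T\}}|K_{\Omega}(\cdot,w)|^{2}$ (again the unknown), the bootstrap does not close; moreover the solution furnished by the Donnelly--Fefferman/Berndtsson estimate is minimal with respect to a \emph{weighted} inner product, so the unweighted orthogonality $\int u_{T}\overline{K_{\Omega}(\cdot,w)}\,\mathrm{d}\lambda=0$ you want for the reproducing identity is not automatic, and the ``holomorphic correction'' repairing this is asserted rather than constructed. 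The standard way to make your plan work --- and essentially how the cited estimate is proved in \cite{Chen2017} and \cite{BerndtssonCharpentier2000} --- is to place the $\bar\partial$-datum near $w$ instead of near $\partial\Omega$: take a smooth radial bump $f\ge0$ supported in a small ball about $w$ with $\int f=1$, so that the Bergman projection satisfies $Pf=K_{\Omega}(\cdot,w)$ by the mean-value property; write $K_{\Omega}(\cdot,w)=f-u$ with $u$ the $L^{2}$-minimal solution of $\bar\partial u=\bar\partial f$; and apply Berndtsson's weighted inequality with $\phi=-\log(-\rho)$ to conclude $\int|u|^{2}(-\rho)^{-s}\,\mathrm{d}\lambda\le C_{s}\int|\bar\partial f|^{2}_{\ast}(-\rho)^{-s}\,\mathrm{d}\lambda<\infty$, the right-hand side being finite simply because $\bar\partial f$ has compact support. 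No bootstrap is then needed, and the restriction $s<1$ enters exactly where you predicted, through the degeneration of the Donnelly--Fefferman constant.
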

Note that $A^p(\log A)^q(\Omega)$ is a natural analogue of the $L\log L$ space, which  plays an important role in harmonic analysis and has been studied by many authors (cf. \cite{Burkholder1962,CalderonZygmund1952,CalderonZygmund1954,Gundy1969,Stein1969} etc).

As an application, we obtain the following
\begin{corollary} \label{corollary6.4}
If  $\Omega$ is a bounded pseudoconvex domain in $\mathbb{C}^n$ with $\alpha_{l}(\Omega) >0$, then  $A^2(\log A)^q(\Omega)$ lies dense in $A^2(\Omega)$ for every $0<q<\alpha_{l}(\Omega)$. 
\end{corollary}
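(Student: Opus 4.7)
The plan is to combine Theorem \ref{theorem1.2} with the classical fact that the linear span of the Bergman kernel sections $\{K_\Omega(\cdot,w):w\in\Omega\}$ is dense in $A^2(\Omega)$. The first task is to verify that $A^2(\log A)^q(\Omega)$ is a linear subspace of $A^2(\Omega)$. Since $\Omega$ is bounded, the inclusion $A^2(\log A)^q \subset A^2$ follows by splitting the integral $\int|f|^2\,d\lambda$ according to whether $|f|\geq e$ (where $(\log^+|f|)^q\geq 1$) or $|f|<e$ (where $|f|^2$ is bounded). Closure under addition reduces to the elementary pointwise inequality
\begin{equation*}
(a+b)^2\bigl(\log^+(a+b)\bigr)^q \leq C_q\bigl(a^2(\log^+ a)^q + b^2(\log^+ b)^q + a^2 + b^2 + 1\bigr), \qquad a,b\geq 0,
\end{equation*}
which I would obtain by assuming $a\geq b$, splitting on $a\geq 1$ versus $a<1$, and using $\log^+(2a)\leq \log 2 + \log^+ a$; closure under scalar multiplication is analogous. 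Conceptually, $A^2(\log A)^q$ is the Orlicz space $L^{\Phi}$ with $\Phi(t)=t^2(\log^+ t)^q$, which satisfies the $\Delta_2$-condition.

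Next, I would recall the standard Hilbert-space argument: if $h\in A^2(\Omega)$ were orthogonal to every kernel section, the reproducing property would yield $\overline{h(w)} = \langle K_\Omega(\cdot,w), h\rangle = 0$ for every $w\in\Omega$, forcing $h\equiv 0$; hence $\mathrm{span}\{K_\Omega(\cdot,w):w\in\Omega\}$ is dense in $A^2(\Omega)$.

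Combining the above, Theorem \ref{theorem1.2} places each section $K_\Omega(\cdot,w)$ in $A^2(\log A)^q(\Omega)$ for every $0<q<\alpha_l(\Omega)$; by the vector-space property, every finite linear combination of such sections still lies in $A^2(\log A)^q(\Omega)$, and by density of these combinations in $A^2(\Omega)$ the corollary follows. The only substantive input is Theorem \ref{theorem1.2} itself; the vector-space check is the main calculation of the argument, but it is a routine $\Delta_2$-type inequality rather than a genuine obstacle.
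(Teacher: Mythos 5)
Your proposal is correct and follows essentially the same route as the paper: invoke Theorem~\ref{theorem1.2} to place each kernel section $K_\Omega(\cdot,w)$ in $A^2(\log A)^q(\Omega)$, use that $A^2(\log A)^q(\Omega)$ is a linear space (the paper cites its Proposition~\ref{prop:3.1}(1) rather than reproving the $\Delta_2$-type inequality, but the content is the same) to capture the span, and then apply the reproducing property to show this span is dense in $A^2(\Omega)$.
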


Barrett \cite{Barrett1984} found for each $k \in \mathbb{Z}^{+}$ a smooth bounded (non-pseudoconvex) domain $\Omega_k \subset \mathbb{C}^2$, such that the Bergman space $A^p(\Omega_k)$ is not dense in $A^2(\Omega_k)$ when $p\ge 2+1/k$. A minic of his construction gives

\begin{theorem}
For every $s \in (0,1)$ and $q \geq \frac{1}{s}-1$,  there exists a smooth bounded (non-pseudoconvex) domain $\Omega^{(s)} \subset \mathbb{C}^2$ such that the following properties hold:

$(1)$ $A^2(\log A)^q(\Omega^{(s)})$ is not dense in $A^2(\Omega^{(s)})$;

$(2)$  $K_{\Omega^{(s)}}(\cdot, w) \notin A^2(\log A)^q(\Omega^{(s)})$ for some $w\in \Omega^{(s)}$.
\end{theorem}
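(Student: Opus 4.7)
The approach is to mimic Barrett's 1984 construction of smoothly bounded non-pseudoconvex Reinhardt domains in $\mathbb{C}^2$, tuning the radial profile by the parameter $s$ so that the logarithmic Orlicz integrability of a distinguished Laurent monomial breaks exactly at the threshold $q = 1/s - 1$. Concretely, I would build $\Omega^{(s)} \subset \mathbb{C}^2$ as a smoothly bounded, $T^2$-invariant domain whose logarithmic image in $\mathbb{R}^2$ is strictly non-convex on a controlled arc (so $\Omega^{(s)}$ is non-pseudoconvex), and calibrate it so that for some $(j,k)$ in the admissible Laurent spectrum $S \subset \mathbb{Z}^2$ the monomial $z_1^j z_2^k \in \mathcal{O}(\Omega^{(s)})$ blows up toward a portion of $\partial \Omega^{(s)}$ with distribution function
\[
\mathrm{vol}\bigl\{|z_1^j z_2^k|^2 > t\bigr\} \asymp t^{-1}(\log t)^{-1/s}\qquad(t \to \infty).
\]

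By $T^2$-invariance the Fourier projector $P_{jk}f = (2\pi)^{-2}\iint_{T^2} f(e^{i\theta_1}z_1, e^{i\theta_2}z_2)\, e^{-i(j\theta_1 + k\theta_2)}\,d\theta_1 d\theta_2$ is contractive both on $A^2(\Omega^{(s)})$, with image $\mathbb{C}\cdot z_1^j z_2^k$, and on the Orlicz space associated with $\Phi(t) = t^2(\log^+ t)^q$ (by convexity of $\Phi$ together with Minkowski's integral inequality). Thus, if $A^2(\log A)^q$ were $L^2$-dense in $A^2$, any $L^2$-approximants $f_n \in A^2(\log A)^q$ of $z_1^j z_2^k$ would yield multiples $c_n z_1^j z_2^k = P_{jk} f_n$ with $c_n \to 1$ and $\|c_n z_1^j z_2^k\|_\Phi \le \|f_n\|_\Phi < \infty$, forcing $z_1^j z_2^k \in A^2(\log A)^q$. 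By the layer-cake formula and the calibrated tail,
\[
\int_{\Omega^{(s)}} |z_1^j z_2^k|^2 (\log^+|z_1^j z_2^k|)^q\, d\lambda \asymp \int_1^\infty (\log t)^q\, \mathrm{vol}\bigl\{|z_1^j z_2^k|^2 > t\bigr\}\, dt \asymp \int_e^\infty t^{-1}(\log t)^{q - 1/s}\, dt,
\]
which diverges exactly when $q \ge 1/s - 1$, whereas the bare $L^2$-integral stays finite. This proves (1).

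For (2), expand the kernel in the orthonormal basis of Reinhardt monomials,
\[
K_{\Omega^{(s)}}(z, w_0) = \sum_{(j',k')\in S} \frac{\overline{w_{0,1}^{j'} w_{0,2}^{k'}}}{\|z_1^{j'} z_2^{k'}\|_{L^2}^2}\, z_1^{j'} z_2^{k'},
\]
and choose $w_0 \in \Omega^{(s)}$ away from the coordinate axes, so that the coefficient in front of the offending $(j,k)$-th monomial is non-zero. The same contraction argument shows that $K_{\Omega^{(s)}}(\cdot, w_0) \in A^2(\log A)^q$ would force $z_1^j z_2^k \in A^2(\log A)^q$, contradicting the divergence above.

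The main obstacle is the explicit construction and calibration of $\Omega^{(s)}$: one must produce a smooth, bounded, non-pseudoconvex Reinhardt defining function whose Laurent spectrum contains a monomial realizing the precise tail $t^{-1}(\log t)^{-1/s}$, and verify that the smoothing required to make $\partial\Omega^{(s)}$ smooth does not accidentally restore pseudoconvexity or spoil the distribution estimate. Once this geometric ingredient is secured, the Reinhardt averaging, layer-cake estimate, and kernel expansion are essentially routine.
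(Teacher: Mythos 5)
There is a genuine obstruction to the Reinhardt setup you propose. On a smooth bounded Reinhardt domain $D\subset\mathbb{C}^2$, every Laurent monomial $z_1^j z_2^k$ that is holomorphic on $D$ is automatically \emph{bounded}. Indeed, if $j<0$ then $D\cap\{z_1=0\}=\emptyset$; if some $(0,w_0)\in\partial D$, a smooth Reinhardt defining function near that point has the form $\rho(|z_1|^2,|z_2|^2)$, and non-degeneracy of the gradient forces $\rho_2\neq 0$, so $\partial D$ is locally a graph $|z_2|^2=h(|z_1|^2)$ --- but then \emph{both} sides of this hypersurface contain points with $z_1=0$, contradicting $D\cap\{z_1=0\}=\emptyset$. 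Hence $\overline{D}$ avoids $\{z_1=0\}$, so $|z_1|\geq\varepsilon$ on $D$ and $z_1^{j}$ is bounded. Consequently no Laurent monomial can realize the tail $t^{-1}(\log t)^{-1/s}$ you need, and the $T^2$-equivariant projector $P_{jk}$ argument collapses: the ``offending'' monomial simply does not exist on a smooth bounded Reinhardt domain. This is precisely why Barrett's construction, which the paper follows, is \emph{not} Reinhardt: the $z$-slices are annuli whose \emph{centers} $c_s(|w|)$ shift away from the origin, so that the inner boundary circle passes through $z=0$ transversally at $|w|=3,4$. Then $\partial\Omega^{(s)}$ is smooth there (the $\partial_z$-component of the gradient is nonzero because $c_s\to\mp1\neq 0$), and $1/z$ is a holomorphic function with a boundary pole whose $L^2$ vs.\ Orlicz integrability is tuned by the decay rate $e^{-2|x-3|^{-s}}$ in $c_s$.

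A second, related gap: once the domain is only rotationally invariant in $w$ (a Hartogs domain, not Reinhardt), the averaging $f_n(z,w)=\frac{1}{2\pi}\int_0^{2\pi}\widetilde f_n(z,e^{i\theta}w)\,d\theta$ does not produce a scalar multiple of a single monomial, so your contraction argument does not immediately yield $1/z\in A^2(\log A)^q$. The paper must instead show (via Cauchy's theorem in $w$ and the identity theorem) that $f_n$ is independent of $w$, Laurent-expand $f_n$ in $z$ on the degenerating annuli, rule out the coefficients with index $\leq -2$ by $L^2$-finiteness, and finally rule out the $z^{-1}$ coefficient by the very non-integrability of $1/z$; only then does the residue/winding-number argument $\int_{|z|=3}f_n\,dz\to 2\pi i$ produce the contradiction. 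This extra bootstrap is the actual content of the paper's Step 2. Your Jensen/Minkowski observation that torus averaging is contractive on the Orlicz space is correct and is used in the paper, but by itself it is far from enough.

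Your treatment of part (2), expanding $K_{\Omega^{(s)}}(\cdot,w_0)$ in Reinhardt monomials, suffers from the same problem (the domain is not Reinhardt, so there is no such orthonormal monomial basis). The paper instead derives (2) from (1) by the reproducing-kernel density argument, as in Corollary 1.4: if every $K_{\Omega^{(s)}}(\cdot,w)$ lay in $A^2(\log A)^q$, then the span of these kernels, which is $L^2$-dense in $A^2$, would sit inside $A^2(\log A)^q$, contradicting (1).
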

\begin{remark}Note that  if $\Omega \subset \mathbb{C}^n$ is bounded, $p>2$ and $q>0$, then
$$
A^p(\Omega) \subset A^2(\log A)^q(\Omega) \subset A^2(\Omega).
$$
Thus Theorem 1.5 implies that there exists a smooth bounded domain $\Omega \subset \mathbb{C}^2$ such that 
$\bigcup_{p>2}A^p(\Omega)$ is not dense in $A^2(\Omega)$. 

\end{remark}

\section{The proof of Theorem 1.1}

The classical method of etimating the Bergman metric is to use the pluricomplex Green function (see, e.g., \cite{Blocki2004,Chen2017,Herbort2008}). %Let $\Omega \subset \mathbb{C}^n$ be a bounded domain with 
%$$
%\alpha_l(\Omega)>\frac{(n-1)+\sqrt{(n-1)(n+3)}}{2},
%$$
By the well-known localization principle for the Bergman metric, we only need to deal with the case 
$$\alpha_l(\Omega) > \frac{n-1+\sqrt{(n-1)(n+3)}}{2}.
$$
Let $\varrho=\varrho_{\overline{B}}:=\sup\{ u\in PSH^{-}(\Omega); u|_{\overline{B}}<-1 \}$ be the relative extremal function of a fixed closed ball $\overline{B}\subset \Omega$. By the extremal property of $\varrho$ there is a constant $C_{\alpha}>0$ for each  $\alpha \in (0,\alpha_{l}(\Omega))$,  such that 
$$
-\varrho(z)\le C_{\alpha}(-\log\delta(z))^{-\alpha}
$$
for all $z$ sufficiently close to $\partial \Omega$. 
%Fix $\alpha \in \left(\frac{(n-1)+\sqrt{(n-1)(n+3)}}{2}, \alpha_l(\Omega)\right)$, then we have $\frac{\alpha^2}{\alpha +1}>n-1$. Let $\beta \in \left( n-1, \frac{\alpha^2}{\alpha +1}  \right)$. First we show the following
%Before Given the proof of Theorem 1.1, we show the following 

%  \begin{remark}
 % {\rm Note that $\alpha> \frac{n-1+\sqrt{(n-1)(n+3)}}2$ implies $\frac{\alpha^2}{\alpha+1}>n-1$, and $\beta>n-1$ implies that the exponent in $\nu$ is positive.}
%  \end{remark}

\begin{lemma}\label{lemma3.2}
For every $r>1$  there exists a constant $\varepsilon_{r}\ll 1$ such that 
$$
\varrho(z_2)\ge  r\varrho(z_1) -C_{\alpha}(-\log |z_1-z_2|)^{-\alpha}
$$
for all $z_1,z_2\in \Omega$ with $|z_1-z_2|\le \varepsilon_r$. %(we call it $pseudo$ $log-H\ddot{o}lder$ continuity for simplicity)
\end{lemma}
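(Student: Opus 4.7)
The strategy is to exhibit, for each pair $(z_1, z_2)$ with $d:=|z_1-z_2|$ sufficiently small, a plurisubharmonic competitor $v$ in the defining family of $\varrho$, i.e., $v\in PSH^{-}(\Omega)$ with $v|_{\overline B}\le -1$, satisfying $v(z_2)\ge r\varrho(z_1)-C_\alpha(-\log d)^{-\alpha}$. The extremal property of $\varrho$ then yields $\varrho(z_2)\ge v(z_2)$, giving the lemma.

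The easy regime is $\delta(z_2)\le d$: applying the log-hyperconvexity bound at $z_2$ and using the monotonicity of $t\mapsto(-\log t)^{-\alpha}$ on $(0,1)$ gives $\varrho(z_2)\ge -C_\alpha(-\log\delta(z_2))^{-\alpha}\ge -C_\alpha(-\log d)^{-\alpha}$, and since $r\varrho(z_1)\le 0$ the estimate is immediate.

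In the main regime $\delta(z_2)>d$, the competitor takes the form $v:=\max\{r\varrho,\psi\}$. The function $r\varrho$ is itself a valid competitor (psh, $\le 0$, and $\le -r\le -1$ on $\overline B$), so one only needs $\psi$ to be psh, $\le 0$ on $\Omega$, $\le -1$ on $\overline B$, and to attain the target value at $z_2$. The function $\psi$ is built from the psh log-bump $\varphi(z):=(-\log(|z-z_1|/R))^{-\alpha}$ for some $R>\operatorname{diam}(\Omega)$ --- plurisubharmonicity for $\alpha>0$ is verified by a direct complex Hessian computation, since $\varphi$ is a convex increasing function of $-\log|z-z_1|$ --- shifted by suitable constants so that $\psi(z_2)$ approaches $r\varrho(z_1)$ up to error $O((-\log d)^{-\alpha})$. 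The factor $r>1$ is crucial here: the slack $(r-1)|\varrho(z_1)|$ separating $r\varrho$ from $\varrho$ is precisely what allows a bounded psh perturbation to boost the max at $z_2$ without violating the constraints elsewhere.

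The main obstacle is the joint verification of all three constraints on $\psi$ simultaneously. The log-hyperconvexity bound $-\varrho\le C_\alpha(-\log\delta)^{-\alpha}$ controls the available slack uniformly in $z_1$, and the smallness threshold $\varepsilon_r\ll 1$ emerges from the shallow regime where $|\varrho(z_1)|$ is small (i.e.\ $z_1$ is close to $\partial\Omega$): in this regime $(r-1)|\varrho(z_1)|$ is tiny, and one must take $d$ small enough that the $O((-\log d)^{-\alpha})$ contribution from the bump at $z_2$ fits within this slack. This delicate balancing of the bump amplitude, radius, and the boundary-distance estimates at $z_1$ and $z_2$ is the technical heart of the argument; it also explains why the conclusion weakens to the form with a factor $r>1$ rather than a clean log-Hölder estimate.
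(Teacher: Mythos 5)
Your overall frame (build a psh competitor $v$ with $v|_{\overline B}\le -1$ and invoke the extremal property of $\varrho$), and your disposal of the easy regime $\delta(z_2)\le d$, are fine. But the construction in the main regime has a genuine gap: a function of the form $\psi=A\,(-\log(|z-z_1|/R))^{-\alpha}+B$ cannot satisfy your three constraints simultaneously in the only nontrivial case, namely $\varrho(z_1)>-1/r$ (when $\varrho(z_1)\le -1/r$ the lemma is trivial since $\varrho\ge -1$). Indeed $\varphi(z_2)=(\log(R/d))^{-\alpha}=O\bigl((-\log d)^{-\alpha}\bigr)$ is tiny, so $\psi(z_2)\ge r\varrho(z_1)-O\bigl((-\log d)^{-\alpha}\bigr)$ forces $B$ to be close to $r\varrho(z_1)$, hence close to $0$, unless $A\gtrsim(-\log d)^{\alpha}$; but $\varphi\ge 0$ everywhere, so $\psi\ge B$ on $\overline B$, contradicting $\psi|_{\overline B}\le -1$, while letting $A$ blow up violates $\psi\le 0$ on $\Omega$. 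A radial bump centered at $z_1$ simply cannot carry information about the value $\varrho(z_1)$ over to $z_2$ while staying below $-1$ on the fixed ball. Your diagnosis of the constants is a second warning sign: if $\varepsilon_r$ had to be small enough that $(-\log d)^{-\alpha}$ fits inside the slack $(r-1)|\varrho(z_1)|$, then $\varepsilon_r$ would depend on $z_1$ and tend to $0$ as $z_1\to\partial\Omega$, and no uniform threshold as in the statement could be obtained. The whole point of the additive error $C_\alpha(-\log d)^{-\alpha}$ in the conclusion is that the gluing cost is \emph{not} absorbed by the slack from $r$.

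The missing idea is a translation, not a bump. The paper sets $\Omega'=\Omega-(z_1-z_2)$ and takes
$$
u=\varrho \ \text{ on } \Omega\setminus\Omega',\qquad u=\max\bigl\{\varrho,\ r\varrho(\cdot+z_1-z_2)-C_\alpha(-\log\varepsilon)^{-\alpha}\bigr\}\ \text{ on } \Omega\cap\Omega',
$$
with $\varepsilon=|z_1-z_2|$. The translated function automatically takes the value $r\varrho(z_1)$ at $z_2$, which is what your bump could not achieve. The two pieces glue into a psh function because every $z\in\Omega\cap\partial\Omega'$ satisfies $\delta(z)\le\varepsilon$, so the log-hyperconvexity bound gives $\varrho(z)\ge -C_\alpha(-\log\varepsilon)^{-\alpha}$, which dominates the second entry of the max there; this is where the error term in the conclusion is generated. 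The factor $r>1$ plays a different role than you assign it: since $\varrho=-1$ on $\overline B$ and $\varrho$ is continuous, one has $\varrho<-1/r$ on a neighborhood of $\overline B$, so for translations of size at most some $\varepsilon_r$ (depending only on $r$, $\overline B$ and $\Omega$) the translated function satisfies $r\varrho(\cdot+z_1-z_2)\le -1$ on $\overline B$; this is the sole source of the threshold $\varepsilon_r$.
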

\begin{proof}
Set $\varepsilon:=|z_1-z_2|$, $\Omega'=\Omega-(z_1-z_2)$ and
$$
u(z) = \left\{
\begin{array}{ll}
\varrho(z) & z\in \Omega\backslash \Omega'\\
\max\{\varrho(z),r\varrho(z+z_1-z_2) - C_{\alpha} (-\log \varepsilon)^{-\alpha}\}
  & z\in \Omega\cap \Omega'.
  \end{array}
  \right.
  $$
  For every $z\in \Omega\cap \partial \Omega'$ we have $\delta(z)\le \varepsilon$, so that
  $$
  \varrho(z) \ge -C_{\alpha} (-\log \varepsilon)^{-\alpha} \ge r\varrho(z+z_1-z_2) - C_{\alpha} (-\log \varepsilon)^{-\alpha}.
    $$
    Thus $u\in PSH^-(\Omega)$. On the other hand, for $\varepsilon\le \varepsilon_r\ll 1$ we have
    $$
    \varrho(z+z_1-z_2)\le -1/r, \forall\,z\in \overline{B},
    $$
    in view of the continuity of $\varrho$. Thus $u|_{\overline{B}}\le -1$. Since $z_2=z_1-(z_1-z_2)\in \Omega\cap \Omega'$, we infer from the extremal property of $\varrho$ that
    $$
    \varrho(z_2) \ge u(z_2)\ge r\varrho(z_1)-C_{\alpha}(-\log \varepsilon)^{-\alpha}.
    $$
\end{proof}

Let $g_{\Omega}(z,w)$ be the pluricomplex Green function of $\Omega$. For $c>0$ we define
$$
A_{\Omega}(w,-c):=\{ z\in \Omega ; g_{\Omega}(z,w)\leq -c \}
$$

\begin{lemma}\label{proposition3.3}
There exists a constant $C\gg 1$ such that for any $w\in \Omega$
$$
A_\Omega(w,-1) \subset \left\{z\in \Omega; \varrho(z)<-C^{-1}\mu(w)\right\},
$$
where $\mu(w):=(-\varrho(w))^{1+\frac{1}{\alpha}}$.
\end{lemma}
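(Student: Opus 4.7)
The plan is to argue the contrapositive: if $\varrho(z) > -C^{-1}a^{1+1/\alpha}$ where $a := -\varrho(w)$ and $C$ is a sufficiently large constant, then $g_\Omega(z,w) > -1$, contradicting $z \in A_\Omega(w,-1)$. First I would invoke the elementary admissible competitor $\log(|\cdot-w|/\mathrm{diam}(\Omega))$ for $g_\Omega(\cdot,w)$; combined with $g_\Omega(z,w)\le -1$ this yields the a priori bound $\varepsilon := |z-w| \le \mathrm{diam}(\Omega)/e$, localizing $z$ near $w$. The log-hyperconvexity supplies the natural scale
\[
\tau(a) := \exp\bigl(-(2C_\alpha/a)^{1/\alpha}\bigr),
\]
below which Lemma 2.2 will be effective.

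In the small-distance regime $\varepsilon \le \tau(a)$, I would apply Lemma 2.2 with $(z_1,z_2) = (z,w)$ and $r = 2$, obtaining
\[
\varrho(w) \ge 2\varrho(z) - C_\alpha(-\log\varepsilon)^{-\alpha}.
\]
The definition of $\tau(a)$ makes the error term at most $a/2$, so $-\varrho(z) \ge a/4$; since $a\le 1$, this readily dominates $C^{-1}a^{1+1/\alpha}$ for any $C\ge 4$. In the complementary macroscopic regime $\tau(a) < \varepsilon \le \mathrm{diam}(\Omega)/e$, Lemma 2.2 is not directly applicable and the hypothesis $g_\Omega(z,w)\le -1$ must be exploited through a more elaborate construction. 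The plan is to build, via a Lemma 2.2-style gluing, a plurisubharmonic competitor for the extremal function $\varrho_{\overline B}$ of the shape
\[
u(\zeta) := \max\bigl\{r\varrho(\zeta),\; g_\Omega(\zeta,w)/M + \text{corrections}\bigr\},
\]
where $M := -\sup_{\overline B} g_\Omega(\cdot, w)$ and the parameters $r$ and the corrections are calibrated so that $u \in PSH^-(\Omega)$ and $u|_{\overline B}\le -1$. The extremal property of $\varrho_{\overline B}$ then transfers the control of $g_\Omega(z, w)$ into the desired upper bound on $\varrho(z)$.

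The main obstacle is the construction in the macroscopic regime. Naive linear competitors of the form $A\varrho + \log(|\cdot-w|/\mathrm{diam}(\Omega))$ admissible for $g_\Omega(\cdot,w)$ yield only the trivial bound $A(-\varrho(z)) \ge \log(e\varepsilon/\mathrm{diam}(\Omega))$, which is vacuous once $\varepsilon \le \mathrm{diam}(\Omega)/e$; so the gluing must genuinely couple $\varrho$ with the log-singularity of $g_\Omega(\cdot,w)$. The calibration is dictated by choosing $A \sim a^{-1/\alpha}$, matching the log-hyperconvex rate $-\log\delta \sim (-\varrho)^{-1/\alpha}$, together with a quantitative control of $M$ by the corresponding admissible function. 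The exponent $1 + 1/\alpha$ in $\mu(w)$ then arises precisely as the product of the linear $a$-dependence of $\varrho(w)$ and the extra $a^{1/\alpha}$ factor coming from the length scale $\tau(a)$.
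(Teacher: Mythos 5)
Your near-$w$ regime is fine (and note that there you never use $g_\Omega(z,w)\le -1$: if $|z-w|\le\tau(a)$, Lemma 2.2 alone forces $-\varrho(z)\ge a/4\ge a^{1+1/\alpha}/4$). But the entire content of the lemma sits in the macroscopic regime, and there your construction points in the wrong direction. A function $u$ that is an \emph{admissible competitor for} $\varrho_{\overline B}$ (negative, psh, $\le -1$ on $\overline B$) only yields $\varrho\ge u$, i.e.\ a \emph{lower} bound on $\varrho(z)$ in terms of $g_\Omega(z,w)$; no choice of $r$, $M$ or ``corrections'' can extract from the extremal property of $\varrho_{\overline B}$ the \emph{upper} bound $\varrho(z)<-C^{-1}\mu(w)$ that the lemma asserts. (That lower-bound direction is precisely what the paper's Proposition 2.5 uses to produce the $\nu(w)$ side of the inclusion; it is not what is needed here.)

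The correct move is dual to yours: build a competitor for the \emph{Green function}, i.e.\ a negative psh function $\psi$ with a logarithmic pole at $w$ satisfying $\psi\ge A\varrho$ away from $w$, where $A:=2C_2\,a^{-1-1/\alpha}$. Concretely, set $\psi=\log(|\cdot-w|/R)$ on $\{\varrho\le\varrho(w)/2\}$ and $\psi=\max\{\log(|\cdot-w|/R),\,A\varrho\}$ elsewhere. The gluing is legitimate because Lemma 2.2 (with $r=3/2$, applied exactly at your scale $\tau(a)$) shows that on the level set $\{\varrho=\varrho(w)/2\}$ one has $\log(|z-w|/R)\ge -C_2a^{-1/\alpha}=A\varrho(w)/2$, so the two branches match up and $\psi\in PSH^-(\Omega)$. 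Then $g_\Omega(\cdot,w)\ge\psi$, and for $z\in A_\Omega(w,-1)$ with $\varrho(z)\ge\varrho(w)/2$ one gets $A\varrho(z)\le\psi(z)\le g_\Omega(z,w)\le-1$, i.e.\ $-\varrho(z)\ge\frac{1}{2C_2}\mu(w)$; the set $\{\varrho<\varrho(w)/2\}$ is handled trivially since $-\varrho(w)/2\ge\mu(w)/2$. Your heuristic for the exponent $1+1/\alpha$ (the ratio of the log-scale $a^{-1/\alpha}$ to the level $a$) is exactly right --- it just has to be implemented on the Green-function side of the comparison, not on the $\varrho_{\overline B}$ side. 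Also note that the natural decomposition is by the level of $\varrho$, not by $|z-w|$, which is what makes the max well defined globally.
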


\begin{proof}
Apply Lemma \ref{lemma3.2} with $r=\frac{3}{2}$, we conclude that if $\varrho(z)=\varrho(w)/2$ and $|z-w|<\varepsilon_{1/2},$ then for suitable $C_1\gg 1$
$$
C_1(-\log |z-w|)^{-\alpha} \ge \frac{3}{2}\varrho(z)-\varrho(w)=-\varrho(w)/4.
$$ 
When $|z-w|\geq \varepsilon_{1/2},$ it is also easy to see that
$$
C_1(-\log |z-w|)^{-\alpha} \geq -\varrho(w)/4
$$
if $C_1$ is large enough. Thus for all $z$ with $\varrho(z)=\varrho(w)/2$ we have
$$
\log \frac{|z-w|}R\ge -C_2 (-\varrho(w))^{-1/\alpha}
$$
for suitable $C_2>0$, where $R$ denotes the diameter of $\Omega$. It follows that 
$$
\psi(z) = \left\{
\begin{array}{ll}
\log(|z-w|/R) & \text{if\ } \varrho(z)    \le\varrho(w)/2\\
\max\{\log(|z-w|/R), 2C_2 (-\varrho(w))^{-1-1/\alpha} \varrho(z)\}
  & \text{otherwise}
  \end{array}
  \right.
  $$
  is a negative psh function on $\Omega$ which has a logarithmic pole at $w$. Thus for $\varrho(z)\ge \varrho(w)/2 $ we have
  $$
  g_\Omega(z,w) \ge \psi(z) \ge 2C_2 (-\varrho(w))^{-1-1/\alpha} \varrho(z),
    $$
    so that
    $$
    A_\Omega(w,-1)\cap \{\varrho\ge \varrho(w)/2 \} \subset \left\{\varrho< -C^{-1} \mu(w)\right\}
    $$
    whenever $C\gg 1$. Since 
    $$
    \{\varrho< \varrho(w)/2 \} \subset \left\{\varrho< -C^{-1} \mu(w)\right\}
    $$
     for $C\gg 1$, we conclude the proof.
  \end{proof}

\begin{prop}\label{proposition3.1}
Let $\alpha >\frac{n-1+\sqrt{(n-1)(n+3)}}{2}$ and $n-1<\beta < \frac{\alpha^2}{\alpha+1}$. Then there exists a constant $C\gg 1$ such that 
 $$
  A_\Omega(w,-1) \subset \left\{z\in \Omega; -C \nu(w) < \varrho(z)<-C^{-1}\mu(w) \right\},
 $$
   where $\nu(w):=(-\varrho(w))^{\left(\frac{1}{n}-\frac{1}{\beta}(1-\frac{1}{n})\right)\frac{n\alpha}{1+\alpha}}$.
  \end{prop}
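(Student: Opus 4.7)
Since Lemma \ref{proposition3.3} already supplies the upper bound $\varrho(z) < -C^{-1}\mu(w)$ on $A_\Omega(w,-1)$, the new content is the complementary lower bound $-C\nu(w) < \varrho(z)$. My plan is to prove its contrapositive: given any $z_0 \in \Omega$ with $-\varrho(z_0) \geq C\nu(w)$ and $C \gg 1$ to be fixed, I will produce a negative psh function $\psi$ on $\Omega$ with a logarithmic singularity of weight at most $1$ at $w$ and $\psi(z_0) > -1$. Extremality of $g_\Omega(\cdot,w)$ then forces $g_\Omega(z_0,w) \geq \psi(z_0) > -1$, contradicting $z_0 \in A_\Omega(w,-1)$.

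The candidate $\psi$ will be modeled on the piecewise construction in the proof of Lemma \ref{proposition3.3}: equal to $\log(|z-w|/R)$ (with $R = \operatorname{diam}\Omega$) on a sublevel set $\{\varrho \leq c\}$, and equal to a max of $\log(|z-w|/R)$ with a negative, bounded, convex-increasing function of $\varrho$ outside this sublevel set, glued across $\{\varrho = c\}$. The natural candidate for the second branch is $-A(-\varrho)^{a}$ with $a \in (0,1)$ (so that the composition is psh by convex-increasing composition) and $A > 0$, both depending on $w$; they are then chosen to enforce (a) the matching inequality on the transition set, via Lemma \ref{lemma3.2} applied with $z_1 = z$, $z_2 = w$ (which bounds $|z-w|$ from below there in terms of $-\varrho(w)$), and (b) the pointwise bound $\psi(z_0) > -1$.

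The exponent $\gamma = \alpha(\beta-(n-1))/(\beta(\alpha+1))$ appearing in $\nu(w)$ will arise from an $n$-fold Monge-Amp\`ere/capacity comparison, in the Bedford-Taylor sense, between a bounded truncation of $\psi$ and $g_\Omega(\cdot,w)$ on nested sublevel sets of $\varrho$. The auxiliary parameter $\beta$ enters as the exponent in an $L^\beta$-type control on the Monge-Amp\`ere mass of $-(-\varrho)^a$; balancing this against the $n$-th power in the standard capacity estimate $\mathrm{Cap}(A_\Omega(w,-1),\Omega) = (2\pi)^n$ produces the factor $(\beta-(n-1))/(n\beta)$ in the exponent of $-\varrho(w)$, multiplied by the factor $n\alpha/(\alpha+1)$ that measures the combined effect of the log-H\"older scale $(-\varrho(w))^{-1/\alpha}$ from Lemma \ref{lemma3.2} and the logarithmic singularity at $w$.

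The main obstacle is organizing the simultaneous compatibility of (i) the matching condition on $\{\varrho = c\}$ (which ties $A$ to $(-\varrho(w))^{-1/\alpha}$ via Lemma \ref{lemma3.2}), (ii) the negativity and plurisubharmonicity of $\psi$ across the glue, and (iii) the Monge-Amp\`ere comparison controlling $\psi(z_0)$. The range $n-1 < \beta < \alpha^2/(\alpha+1)$ is exactly what makes the Monge-Amp\`ere exponent positive and the matching on the transition set non-vacuous; the common admissibility of these two is equivalent to $\alpha^2 > (n-1)(\alpha+1)$, i.e.\ $\alpha > \frac{n-1+\sqrt{(n-1)(n+3)}}{2}$, which is the hypothesis of the proposition.
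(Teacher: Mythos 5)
Your plan fails at the claimed pointwise bound $\psi(z_0) > -1$. Your $\psi$ equals $\log(|z-w|/R)$ on the sublevel set $\{\varrho \le c\}$, which is exactly where the relevant points $z_0$ (those with $-\varrho(z_0)\ge C\nu(w)$) live. But Lemma \ref{lemma3.2} applied with $z_1=w$, $z_2=z_0$ gives, for $-\varrho(z_0)$ comparable to $C\nu(w)$, only the lower bound $\log|z_0-w|\gtrsim -(-\varrho(w))^{-\gamma/\alpha}$, and this right-hand side tends to $-\infty$ as $w\to\partial\Omega$. So $\psi(z_0)=\log(|z_0-w|/R)$ can be arbitrarily negative, and the contrapositive argument collapses. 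More fundamentally, a competitor with a log pole at $w$ must be at least as singular as $\log|\cdot-w|$ near $w$, and your construction forces the same pure log behavior in the deep region; no choice of the parameters $A,a,c$ in the other branch can prevent $\psi$ from being much smaller than $-1$ at $z_0$.

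What is missing is the mechanism that lets one pass from information about $g_\Omega(\cdot,\zeta)$ (the pole at the deep point, which is controlled by the proof of Lemma \ref{proposition3.3} and gives $|g_\Omega(w,\zeta)|\lesssim(-\varrho(\zeta))^{-1-1/\alpha}(-\varrho(w))$) to a lower bound for $g_\Omega(\zeta,w)$ (pole at the boundary point). The pluricomplex Green function is not symmetric when $n\ge 2$, and the paper's proof hinges on B\l ocki's quasi-symmetry estimate, stated as Lemma \ref{lm:4.2}: there is $\tilde\zeta$ near $\zeta$ with $|g_\Omega(\tilde\zeta,w)|^n\le n!(\log R/\varepsilon)^{n-1}|g_\Omega(w,\zeta)|$. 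That $n$-th root is exactly where the factor $n\alpha/(\alpha+1)$ you mention comes from, and the $(\log R/\varepsilon)^{1-1/n}$ factor, with the specific choice $\varepsilon=\exp(-(-\varrho(w))^{-1/\beta})$, is where $\beta$ enters. One then needs a further translation-type comparison (the auxiliary function $v$ built from $\varrho_{\overline{B}(w,\varepsilon)}$, together with the two-sided inequalities relating $g_\Omega(\cdot,w)$ to $\varrho_{\overline{B}(w,\varepsilon)}$) to move from $\tilde\zeta$ back to $\zeta$. Your ``Monge--Amp\`ere/capacity comparison'' gestures in the right general direction, since B\l ocki's lemma is itself a comparison-principle result, but as stated it is not a proof: it does not specify how the asymmetry is handled, and the competitor-function framework you set up cannot produce the required estimate on its own.
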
 

\begin{remark}
Note that $\alpha >\frac{n-1+\sqrt{(n-1)(n+3)}}{2}$ implies $\frac{\alpha^2}{\alpha+1}>n-1$, and $\beta >n-1$ implies that the exponent in $\nu$ is positive.
\end{remark}
  The proof of Proposition \ref{proposition3.1} is based on the following
  \begin{lemma}[cf. \cite{Blocki2004}]\label{lm:4.2}
 Let $\Omega\subset {\mathbb C}^n$ be a bounded hyperconvex domain.  Suppose $\zeta,w$ are two points in $\Omega$ such that the closed balls $\overline{B}(\zeta,\varepsilon),\,\overline{B}(w,\varepsilon)\subset \Omega$ and $\overline{B}(\zeta,\varepsilon)\cap \overline{B}(w,\varepsilon)=\emptyset$. Then there exists $\tilde{\zeta}\in \overline{B}(\zeta,\varepsilon)$ such that
\begin{equation}\label{eq:2.1}
|g_\Omega(\tilde{\zeta},w)|^n \le n!(\log R/\varepsilon)^{n-1}|g_\Omega(w,\zeta)|.
\end{equation}
\end{lemma}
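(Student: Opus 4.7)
The lemma is due to B{\l}ocki \cite{Blocki2004}; my plan is to follow his pluripotential argument. Write $u := g_\Omega(\cdot,w)$, a nonpositive plurisubharmonic function with logarithmic pole at $w$, extending continuously to zero on $\partial\Omega$ by hyperconvexity. Let $\tilde{\zeta}\in\overline{B}(\zeta,\varepsilon)$ be a point at which $u$ attains its maximum on that closed ball, and set $M := |u(\tilde{\zeta})| = \min_{\overline{B}(\zeta,\varepsilon)} |u|$. The goal reduces to proving
\[
M^n \leq n!(\log R/\varepsilon)^{n-1}|g_\Omega(w,\zeta)|.
\]

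The first step is a uniform upper bound on $|u|$ over $\overline{B}(\zeta,\varepsilon)$. Since the balls $\overline{B}(\zeta,\varepsilon)$ and $\overline{B}(w,\varepsilon)$ are disjoint, $|z-w|\geq\varepsilon$ for every $z\in\overline{B}(\zeta,\varepsilon)$; plugging the competitor $\log(|z-w|/R)$ into the extremal definition of $g_\Omega(\cdot,w)$ yields
\[
|u(z)| \leq \log(R/\varepsilon),\qquad z\in\overline{B}(\zeta,\varepsilon).
\]
Combined with the trivial inclusion $\zeta\in\overline{B}(\zeta,\varepsilon)$, this gives $M\leq |u(\zeta)|$.

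The heart of the argument is the asymmetry estimate
\[
|u(\zeta)|^n \leq n!(\log R/\varepsilon)^{n-1}|g_\Omega(w,\zeta)|,
\]
which upgrades the trivial bound into the target one. Using $(dd^c g_\Omega(\cdot,\zeta))^n = (2\pi)^n\delta_\zeta$ and the continuity of $u$ and $g_\Omega(\cdot,\zeta)$ on $\overline{\Omega}$ (by hyperconvexity), one writes $|u(\zeta)|^n = (2\pi)^{-n}\int_\Omega(-u)^n(dd^c g_\Omega(\cdot,\zeta))^n$. Iterated integration by parts then converts this into a quantity involving $\int_\Omega(-g_\Omega(\cdot,\zeta))(dd^c u)^n = (2\pi)^n|g_\Omega(w,\zeta)|$: at each of the $n-1$ successive applications, differentiating $(-u)^k$ contributes a multiplicative factor $k$ (accumulating to the combinatorial constant $n!$), and one factor of $(-u)$ is absorbed into the uniform bound $\log(R/\varepsilon)$ from the first step (accumulating to the factor $(\log R/\varepsilon)^{n-1}$). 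Together with $M\leq |u(\zeta)|$, this completes the argument.

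The principal obstacle is justifying the iterated integration by parts rigorously for non-smooth plurisubharmonic functions with logarithmic singularities. The standard resolution is to work first on smooth strictly pseudoconvex subdomains $\Omega_j\uparrow\Omega$ (which exist by hyperconvexity) with smoothed approximations of $u$ and $g_\Omega(\cdot,\zeta)$, where the manipulations are classical, and then pass to the limit via Bedford-Taylor monotone-convergence theorems for the Monge-Amp\`ere operator. The boundary contributions at each step vanish because $u$ and $g_\Omega(\cdot,\zeta)$ extend continuously to zero on $\partial\Omega$, and careful bookkeeping is needed to ensure that only the local sup-norm over $\overline{B}(\zeta,\varepsilon)$ --- and not the global sup-norm, which is infinite due to the pole of $u$ at $w$ --- enters the estimate at each stage.
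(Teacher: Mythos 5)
First, a bookkeeping remark: the paper does not prove this lemma at all --- it is quoted from B{\l}ocki's paper --- so your sketch has to be measured against that source rather than against an in-paper argument.

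Your outline has the right ingredients (Bedford--Taylor integration by parts, a Monge--Amp\`ere mass concentrated near $\zeta$, extraction of $\tilde\zeta$), but the central step is not correct. You reduce the lemma to the \emph{pointwise} inequality $|g_\Omega(\zeta,w)|^n\le n!(\log R/\varepsilon)^{n-1}|g_\Omega(w,\zeta)|$ at $\zeta$ itself. That statement is strictly stronger than the lemma: if it held, the ``there exists $\tilde\zeta\in\overline B(\zeta,\varepsilon)$'' formulation would be pointless, and the translation argument carried out immediately after this lemma in the paper (which laboriously transfers information from $\tilde\zeta$ back to $\zeta$ at the cost of extra error terms) would be unnecessary. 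Moreover your mechanism cannot deliver it. In the iterated integration by parts one has, for negative psh $u,v$ vanishing on $\partial\Omega$ and a closed positive $(n-1,n-1)$-current $T$, the estimate $\int_\Omega(-u)^k\,dd^cv\wedge T\le k\int_\Omega(-v)(-u)^{k-1}dd^cu\wedge T$; thus the factor absorbed at each stage is $(-v)$ --- the function whose Monge--Amp\`ere measure sits on the left --- and it is evaluated against the intermediate mixed measures $(-u)^{k-1}dd^cu\wedge T$, which are spread over all of $\Omega$ and are in no way localized to $\overline B(\zeta,\varepsilon)$. With $v=g_\Omega(\cdot,\zeta)$ untruncated this supremum is $+\infty$, while the quantity you propose to absorb, $\sup_{\overline B(\zeta,\varepsilon)}|g_\Omega(\cdot,w)|\le\log(R/\varepsilon)$, simply does not arise in the computation; no amount of ``careful bookkeeping'' makes it appear.

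The actual argument runs as follows. Set $u:=g_\Omega(\cdot,w)$ and $v:=\max\{g_\Omega(\cdot,\zeta),\log(\varepsilon/R)\}$. Then $\|v\|_\infty=\log(R/\varepsilon)$ (this, not your Step 1, is where the constant comes from), $(dd^cv)^n$ has total mass $(2\pi)^n$ and is supported in $\{g_\Omega(\cdot,\zeta)\le\log(\varepsilon/R)\}\subset\overline B(\zeta,\varepsilon)$. Iterating the inequality above and finishing with one symmetric integration by parts gives $\int_\Omega(-u)^n(dd^cv)^n\le n!\,\|v\|_\infty^{n-1}\int_\Omega(-v)(dd^cu)^n=n!\,(\log R/\varepsilon)^{n-1}(2\pi)^n(-v(w))\le n!\,(\log R/\varepsilon)^{n-1}(2\pi)^n|g_\Omega(w,\zeta)|$, while the left-hand side is at least $(2\pi)^n\min_{\overline B(\zeta,\varepsilon)}(-u)^n$; taking $\tilde\zeta$ to be the minimizer (your choice of $\tilde\zeta$ is the right one) yields \eqref{eq:2.1}. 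The unboundedness of $u$ at $w$ is handled by truncating $u$ at level $-j$ and letting $j\to\infty$, which is harmless because $u$ is already bounded on $\mathrm{supp}\,(dd^cv)^n$. So: correct choice of $\tilde\zeta$ and correct general toolbox, but the key inequality is attributed to the wrong function and claimed at the wrong point, and the truncation of $g_\Omega(\cdot,\zeta)$ --- the actual engine of the proof --- is missing.
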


\begin{proof}[Proof of Proposition \ref{proposition3.1}]
For fixed $w\in \Omega$ which is sufficiently close to $\partial \Omega$, we set
$$
\varepsilon:=\exp({-(-\varrho(w))^{-1/\beta}}),
$$
 i.e., 
$
-\varrho(w)=(-\log \varepsilon)^{-\beta}$. Since $-\varrho(w)\le C_0(-\log \delta(w))^{-\alpha}$ and $\beta<\alpha$, it follows that if $\delta(w)$ is sufficiently small then
$$
2^\alpha C_0(-\log \varepsilon)^{-\alpha}\le (-\log \varepsilon)^{-\beta}\le C_0(-\log \delta(w))^{-\alpha},
$$
so that $\delta(w)\ge \sqrt{\varepsilon}$ and we have $B(w,\varepsilon)\subset\subset \Omega$. If $\delta(z)\le \varepsilon$, then
\begin{equation}\label{eq:2.2}
-\varrho(z) \le C_0 (-\log \delta(z))^{-\alpha}\le C_0 (-\log \varepsilon)^{-\alpha} = C_0 (-\varrho(w))^{\alpha /\beta}\le -\varrho(w)/2.
\end{equation}
By the proof of  Proposition \ref{proposition3.3}, we have
\begin{equation}\label{eq:2.3}
g_\Omega(z,w) \ge C_1 (-\varrho(w))^{-1-1/\alpha} \varrho(z)
\end{equation}
for some constant $C_1\gg 1$.
This combined with (\ref{eq:2.2}) gives
\begin{equation}\label{eq:2.4}
\sup_{\delta\le \varepsilon} |g_\Omega(\cdot,w)| \le C_0C_1 (-\varrho(w))^{\alpha / \beta -1-1/\alpha}.
\end{equation}
By Lemma \ref{lemma3.2}, we see that if $\varrho(\zeta)\le 2\varrho(w)$ then for suitable $C_2>0$,
$$
C_2(-\log |\zeta-w|)^{-\alpha} \ge  \frac{3}{2}\varrho(w)-\varrho(\zeta)\geq -\varrho(w)/2=(-\log \varepsilon)^{-\beta} /2 > 2^\alpha C_2 (-\log \varepsilon)^{-\alpha}
$$
whenever $\delta(w)\ll 1$, so that $|\zeta-w|>\sqrt{\varepsilon}$ and 
$$
\overline{B}(\zeta,\varepsilon)\cap \overline{B}(w,\varepsilon)=\emptyset.
$$
It follows from Lemma \ref{lm:4.2} that there exists $\tilde{\zeta}\in \overline{B}(\zeta,\varepsilon)$ such that (\ref{eq:2.1}) holds. We also need the following known inequalities (cf. \cite{BlockiPotential}, Proposition 3.3.3)
\begin{equation}\label{eq:2.5}
g_\Omega(z,w) \ge (\log R/\varepsilon)\cdot \varrho_{\overline{B}(w,\varepsilon)},\ \ \ \forall\,z\in \Omega\backslash {B}(w,\varepsilon)
\end{equation}
\begin{equation}\label{eq:2.6}
g_\Omega(z,w) \le (\log \delta(w)/\varepsilon)\cdot \varrho_{\overline{B}(w,\varepsilon)},\ \ \ \forall\,z\in \Omega.
\end{equation}
By (\ref{eq:2.4}) and (\ref{eq:2.6}), we have
\begin{equation}\label{eq:2.7}
\sup_{\delta\le \varepsilon} |\varrho_{\overline{B}(w,\varepsilon)}| \le C_0C_1(\log \delta(w)/\varepsilon)^{-1} (-\varrho(w))^{\alpha /\beta-1-1/\alpha}=:\tau_\varepsilon(w). 
\end{equation}
Set $\widetilde{\Omega}:=\Omega-(\tilde{\zeta}-\zeta)$ and
$$
v(z) = \left\{
\begin{array}{ll}
\varrho_{\overline{B}(w,\varepsilon)}(z) & z\in \Omega\backslash \widetilde{\Omega}\\
\max\{ \varrho_{\overline{B}(w,\varepsilon)}(z), \varrho_{\overline{B}(w,\varepsilon)}(z+\tilde{\zeta}-\zeta)- \tau_\varepsilon(w)\}
  & z\in \Omega\cap \widetilde{\Omega}.
  \end{array}
  \right.
  $$
  By (\ref{eq:2.7}), we see that $v$ is a well-defined negative psh function on $\Omega$. On the other hand, since  
  $$
  \varrho_{\overline{B}(w,\varepsilon)}(z)\le \frac{\log \left(|z-w|/\delta(w)\right)}{\log R/\varepsilon},\ \ \ z\in \Omega\backslash B(w,\varepsilon),
  $$
  in view of (\ref{eq:2.5}), and $z+\tilde{\zeta}-\zeta\in \overline{B}(w,2\varepsilon)$ if $z\in \overline{B}(w,\varepsilon)$,   we have
  $$
  v|_{\overline{B}(w,\varepsilon)}\le -\frac{\log (\delta(w)/(2\varepsilon))}{\log R/\varepsilon}
  $$
  so that 
  $$
  \varrho_{\overline{B}(w,\varepsilon)}(\tilde{\zeta}) - \tau_\varepsilon(w) \le v(\zeta) \le \frac{\log (\delta(w)/(2\varepsilon))}{\log R/\varepsilon}\cdot \varrho_{\overline{B}(w,\varepsilon)}(\zeta).
  $$
  This combined with (\ref{eq:2.5}) and (\ref{eq:2.6}) gives
  \begin{eqnarray}
\notag  g_\Omega(\zeta,w) & \ge & \frac{(\log R/\varepsilon)^2}{\left(\log \delta(w)/\varepsilon \right) \cdot \left( \log \delta(w)/(2\varepsilon)\right)} \left( g_\Omega(\tilde{\zeta},w)-C_0C_1(-\varrho(w))^{\alpha /\beta-1-1/\alpha}\right)\\
\notag  & \ge & C_3 \left( g_\Omega(\tilde{\zeta},w)-C_0C_1(-\varrho(w))^{\alpha / \beta-1-1/\alpha}\right)\\
\notag  & \ge & -C_3 (n!)^{1/n}(\log R/\varepsilon)^{1-1/n} |g_\Omega(w,\zeta)|^{1/n}-C_0C_1C_3(-\varrho(w))^{\alpha / \beta-1-1/\alpha} \\
  &\ge & -C_4 (-\varrho(w))^{\frac{1}{n}-(1-\frac{1}{n})\frac{1}{\beta}}(-\varrho(\zeta))^{-\frac1n(1+\frac{1}{\alpha})}-C_0C_1C_3(-\varrho(w))^{\alpha /\beta-1-1/\alpha}     \end{eqnarray}
  for some constants $C_4\gg C_3>0$, in view of  (\ref{eq:2.3}).  Since  $\alpha / \beta -1-1/\alpha >0$, we see that if $\delta(w)\ll1$ then  $C_0C_1C_2(-\varrho(w))^{\alpha / \beta -1-1/\alpha } \le 1/2$, so that   
  $$
  A_\Omega(w,-1)\cap \{\varrho\le 2\varrho(w)\}\subset \{\varrho>-C\nu(w)\}
  $$      
  for some $C\gg 1$. On the other hand, we have $\{\varrho>2\varrho(w)\}\subset \{\varrho>-C\nu(w)\}$ for $C\gg 1$ since the exponent in $\nu$ is less than one. Thus the proof is complete.  
 \end{proof}

%Clearly, (2.8) implies

%\begin{corollary}\label{corollary3.5}
%Under the assumption of proposition \ref{proposition3.1}, we have
%$$
%\sup_{z\in K} |g_{\Omega}(z,w)| \lesssim (-\log \delta(w))^{-c\alpha}
%$$ 
%for every compact set $K \subset \Omega$, where $c = \min \{ \frac{1}{n}-\frac{1}{\beta}(1-\frac{1}{n}), \frac{\alpha}{\beta}-1-\frac{1}{\alpha} \}$.
%\end{corollary}
%Note that Diederich-Herbort gave a quantative estimate of $\sup_{z\in K} |g_{\Omega}(z,w)|$  when $\eta(\Omega)>0$ (cf. \cite{DiederichHerbort2000}, also see \cite{Herbort2000,Herbort2008}).

\begin{proof}[Proof of Theorem \ref{theorem1.3}]

 Let $z\in \Omega$ be  sufficiently close to $\partial \Omega$. We may choose  a Bergman geodesic  jointing $z_0$ to $z$,  and a finite number of points $\{z_k\}_{k=1}^m$ on this geodesic with the following order
  $$
  z_0 \rightarrow z_1\rightarrow z_2\rightarrow \cdots \rightarrow z_m \rightarrow z,
  $$
  where
  $$
  C^{-1} \mu(z_k) =C\nu(z_{k+1}) \ \ \ \text{and}\ \ \  C^{-1}\mu(z_m)\le -\varrho(z)\le C\nu(z_m)
   $$
   for some $C\gg 1$ so that Proposition \ref{proposition3.1}  hold. 
  Thus we have 
  $$
  \{g_\Omega(\cdot,z_k)\le -1\}\cap \{g_\Omega(\cdot,z_{k+1})\le -1\}=\emptyset
  $$
   so that  $d_B(z_k,z_{k+1})\ge c_1>0$ for all $k$ in view of Theorem 1.1 in \cite{Blocki2004}.
  
   Set $\gamma:=\left(\frac{1}{n}-\frac{1}{\beta}(1-\frac{1}{n})\right)\frac{n\alpha}{1+\alpha}$.
    Note that
   \begin{eqnarray*}
   \log (-\varrho(z_0)) & = & \frac{\gamma\alpha}{1+\alpha} \log (-\varrho(z_1)) + \frac{2\alpha}{\alpha+1} \log C =\cdots \\
   & = &  \left( \frac{\gamma\alpha}{1+\alpha} \right)^m  \log (-\varrho(z_m)) + \frac{1-\left(\frac{\gamma\alpha}{1+\alpha}\right)^m}{1-\frac{\gamma\alpha}{1+\alpha}} \frac{2\alpha}{\alpha+1} \log C       \end{eqnarray*}
      Thus we have
      $$
      m\asymp \log |\log (-\varrho (z_m))| \asymp \log |\log (-\varrho (z))| \gtrsim \log\log |\log \delta(z)|,
      $$
      so that 
    $$
  d_B(z_0,z)  \ge  \sum_{k=1}^{m-1} d_B(z_k,z_{k+1})\ge c_1(m-1) \gtrsim \log\log |\log \delta(z)|.
    $$
 \end{proof}

\section{Bergman Kernel and $A^p(\log A)^q$}

We first introduce some basic facts about the $A^p(\log A)^q$ space. For $p,q>0$, let
$L^p(\log L)^q(\Omega)$ be the set of measurable complex-valued  functions $f$ on $ \Omega $ such that 
$$ \int_{\Omega} |f|^p(\log^{+} |f|)^q  \mathrm{d}\lambda< \infty       .       $$ 
Set
$$\|f\|_{L^p(\log L)^q(\Omega)}=\inf \left\{ s>0;\int_{\Omega} \left(\frac{|f|}{s}\right)^p\left( \log^{+}\frac{|f|}{s}\right)^q  \mathrm{d}\lambda\leq 1 \right\} $$
and
$$
A^p(\log A)^q(\Omega):=L^p(\log L)^q (\Omega) \cap \mathcal{O}(\Omega).
$$ 
When $q=0$, $\| \cdot \|_{L^p(\log L)^q(\Omega)}$ is the usual $L^p$ norm.

\begin{prop}\label{prop:3.1}
If $\Omega \subset \mathbb{C}^n$ is a bounded domain, then the following properties hold:

$(1)$ $L^p(\log L)^q(\Omega)$ is a linear space;

$(2)$ $L^p(\log L)^q(\Omega) \subset L^p(\Omega)$ and $\|\cdot\|_{L^p(\Omega)} \lesssim \|\cdot\|_{L^p(\log L)^q(\Omega)}$;

$(3)$ If $p,q \geq 1$, then $L^p(\log L)^q(\Omega)$ is a Banach space with norm $\|\cdot \|_{L^p(\log L)^q(\Omega)}$;

$(4)$ If $p,q \geq 1$, then $A^p(\log A)^q$ is a closed subspace of $L^p(\log L)^q(\Omega)$.
\end{prop}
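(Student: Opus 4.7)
The plan is to recognize $L^p(\log L)^q(\Omega)$ as an Orlicz-type space with Young function $\Phi(t)=t^p(\log^+t)^q$ and to verify the four properties by direct elementary arguments adapted to this explicit $\Phi$.

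For part (1), scalar multiplication is immediate from $|\lambda f|^p=|\lambda|^p|f|^p$ together with $\log^+|\lambda f|\le\log^+|\lambda|+\log^+|f|$. For addition I would combine $|f+g|^p\le 2^p(|f|^p+|g|^p)$ with $\log^+|f+g|\le\log 2+\log^+|f|+\log^+|g|$ and the crude inequality $(a+b+c)^q\le 3^q(a^q+b^q+c^q)$ to dominate the modular of $f+g$ by finitely many finite integrals. For part (2), I would split $\Omega=\{|f|\le e\}\cup\{|f|>e\}$: on the first set $|f|^p\le e^p$ contributes $e^p|\Omega|$, while on the second $\log^+|f|\ge 1$ yields $|f|^p\le|f|^p(\log^+|f|)^q$. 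Homogenizing via the scaling $f\mapsto f/s$ built into the definition of the Luxemburg norm then gives the stated comparison with a constant depending only on $p,q,|\Omega|$.

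For part (3), the central task is verifying that $\Phi$ is a convex strictly increasing function on $[0,\infty)$ with $\Phi(0)=0$ and $\Phi(t)/t\to\infty$; once this is in place, the Luxemburg functional is a bona fide norm by a standard scaling argument, and completeness follows the classical Orlicz pattern (a Cauchy sequence in norm has an a.e.\ convergent subsequence, and Fatou's lemma applied to the modular $\int\Phi(|f_n-f|/s)\,d\lambda$ yields both membership and norm convergence). Part (4) is then a direct corollary of part (2): convergence in $L^p(\log L)^q$ implies convergence in $L^p(\Omega)$, which for holomorphic functions implies local uniform convergence via the sub-mean-value inequality applied to $|f|^p$, so the limit remains holomorphic.

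The one delicate point is the convexity of $\Phi$ across the transition at $t=1$. On $[0,1]$ one has $\Phi\equiv 0$, while for $t>1$ a direct computation gives $\Phi''(t)=t^{p-2}\{p(p-1)(\log t)^q+q(2p-1)(\log t)^{q-1}+q(q-1)(\log t)^{q-2}\}$; under $p,q\ge 1$ every summand is non-negative, and the assumption $q\ge 1$ is exactly what prevents the last term from being negative and singular as $t\to 1^+$ (which it would be for $0<q<1$). Combined with the easy check that the right derivative $\Phi'(1^+)$ is non-negative, this establishes convexity on the whole half-line. This is the sole step where the hypothesis $q\ge 1$ in parts (3) and (4) is genuinely used.
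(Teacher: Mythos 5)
Your proposal follows the same Orlicz--space viewpoint as the paper's proof, and the four parts line up one-to-one with the paper's steps; the main value you add is that for part (3) you \emph{prove} the convexity of $\Phi(t)=t^p(\log^+t)^q$ rather than citing it. The paper simply invokes Theorem~10 of Chapter~3 in Rao--Ren with $g(t)=|t|^p(\log^+|t|)^q$ as the Young function, and reserves the convexity discussion for a remark afterward (where it notes $g$ is only eventually convex for general $q\ge 0$ and proposes the fix $h(t)=\max\{0,g(t)-g(t_0)\}$). Your second-derivative computation
\[
\Phi''(t)=t^{p-2}\bigl\{p(p-1)(\log t)^q+q(2p-1)(\log t)^{q-1}+q(q-1)(\log t)^{q-2}\bigr\}, \quad t>1,
\]
is correct, and your diagnosis that $q\ge 1$ is precisely what prevents the term $q(q-1)(\log t)^{q-2}$ from being negative and singular at $t=1^+$ matches the paper's remark exactly. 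Your treatment of parts (2) and (4) is also correct; in (4) you invoke the sub-mean-value inequality directly where the paper instead leans on completeness of $A^p(\Omega)$, but these are interchangeable.

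One step in part (1) as you sketched it is incomplete. After combining $|f+g|^p\le 2^{p-1}(|f|^p+|g|^p)$ with $(\log^+|f+g|)^q\lesssim (\log 2)^q+(\log^+|f|)^q+(\log^+|g|)^q$ and expanding the product, you obtain cross terms such as $|f|^p(\log^+|g|)^q$, and these are \emph{not} immediately ``finite integrals'': finiteness of $\int|f|^p(\log^+|g|)^q$ requires the further observation that pointwise $|f|^p(\log^+|g|)^q\le |f|^p(\log^+|f|)^q+|g|^p(\log^+|g|)^q$ (split on $\{|f|\ge|g|\}$ and its complement). The paper avoids the cross terms altogether by estimating $\frac{|f|+|g|}{2}\le\max\{|f|,|g|\}$ and splitting $\Omega$ into $\{|f|\ge|g|\}\cup\{|f|<|g|\}$ from the start; you should either adopt that decomposition or record the pointwise bound above before claiming the dominating integrals are finite.
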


The arguments are standard, and we include the proof for the sake of completeness. 

\begin{proof}

$(1)$ Given $f,g \in L^p(\log L)^q(\Omega)$ and $c \in \mathbb{C}-\{0\}$, we have
\begin{eqnarray*}
\int_{\Omega} |cf|^p(\log^{+}|cf|)^q \mathrm{d}\lambda &=& \left( \int_{\{|f|\leq |c| \}}+\int_{\{ |f|>  |c|\}}\right)|cf|^p(\log^{+}|cf|)^q \mathrm{d}\lambda  \\
& \leq& \mathrm{const.} + \int_{\{ |f|> |c|\}}|c|^p|f|^p(\log^{+} |c|+\log^{+} |f|)^q \mathrm{d}\lambda  \\
&\leq& \mathrm{const.}+  \int_{\Omega} |c|^p|f|^p(2\log^{+} |f|)^q \mathrm{d}\lambda \\
&= & \mathrm{const.}+ 2^q|c|^p\int_{\Omega}|f|^p(\log^{+} |f|)^q \mathrm{d}\lambda \\
&<& +\infty,
\end{eqnarray*}
and 
\begin{eqnarray*}
&&\int_{\Omega}\left|\frac{f+g}{2}\right|^p\left(\log^{+} \left| \frac{f+g}{2} \right|  \right)^q \mathrm{d}\lambda \\
&&\hspace{1.2cm}\leq  \int_{\Omega}\left(\frac{|f|+|g|}{2}\right)^p\left(\log^{+} \left( \frac{|f|+|g|}{2} \right)  \right)^q  \mathrm{d}\lambda\\
&&\hspace{1.2cm}\leq \left( \int_{\{|f|\geq |g|\}} +\int_{\{|f|< |g|\}} \right)\left(\frac{|f|+|g|}{2}\right)^p\left(\log^{+} \left( \frac{|f|+|g|}{2} \right)  \right)^q  \mathrm{d}\lambda\\
&&\hspace{1.2cm}\leq \int_{\Omega}|f|^p(\log^{+} |f|)^q\mathrm{d}\lambda +\int_{\Omega} |g|^p(\log^{+} |g|)^q \mathrm{d}\lambda \\
&&\hspace{1.2cm}< \infty.
\end{eqnarray*}
Hence $A^p(\log A)^q(\Omega)$ is closed under scalar multiplication and addition, which implies (1).

$(2)$
%We claim that if $f\in L^p(\log L)^q(\Omega)$, then we have
%\begin{equation}\label{equation}
%\|f\|_{L^p(\Omega)} \lesssim \|f\|_{L^p(\log L)^q(\Omega)}.
%\end{equation}
%In fact, 
If $\|f\|_{L^p(\log L)^q(\Omega)}=0$, then for every $\varepsilon >0$ we have
$$
\int_{\Omega}|f|^p\left(\log^+ \frac{|f|}{\varepsilon}\right)^q\mathrm{d}\lambda \leq \varepsilon^p,
$$
since $h(t)=\int_{\Omega}  \left( |f|/t \right)^p\left( \log^{+} |f|/t \right)^q$ is nonincreasing for $t>0$. In particular, we have $\|f\|_{L^p(\Omega \cap \{ |f|>\varepsilon e \}) } \leq \varepsilon$. Letting $\varepsilon \to 0$, we get $\|f\|_{L^p(\Omega)}=0$. Now suppose  
$$\|f\|_{L^p(\log L)^q(\Omega)}=\tau>0.$$ It follows again from the monotonicity of $h(t)$ that
$$
\int_{\Omega} \left( \frac{|f|}{\tau+\varepsilon}  \right)^p \left( \log^{+} \frac{|f|}{\tau+\varepsilon}  \right)^q\mathrm{d}\lambda \leq 1, \, \, \, \forall \, \varepsilon >0.
$$
Hence
\begin{eqnarray*}
\int_{\Omega} |f|^p\mathrm{d}\lambda &\leq& \int_{ \{ |f| \leq (\tau+\varepsilon) e\}}|f|^p\mathrm{d}\lambda + \int_{ \{ |f|>(\tau+\varepsilon) e\}} |f|^p \left( \log^{+} \frac{|f|}{\tau+\varepsilon}\right)^q\mathrm{d}\lambda \\
&\leq& (|\Omega|e^p +1)(\tau+\varepsilon)^p ,\, \, \, \forall \, \varepsilon >0,
\end{eqnarray*}
from which $(2)$ immediaely follows.

$(3)$
The assertion follows directly from (1) and Theorem 10 in Chapter 3 of \cite{Rao} with $g(t)=|t|^p(\log^{+}|t|)^q$ as a Young function. 

$(4)$
Let $\{ h_n\}$ be a Cauchy sequence in $A^p(\log A)^q(\Omega)$. By $(2)$ and $(3)$  there is an $h \in L^p(\log L)^q(\Omega)$ such that $h_n \to h$ under $\|\cdot\|_{L^p(\Omega)}$ and $\|\cdot \|_{L^p(\log L)^q(\Omega)}$. Since $A^p(\Omega)$ is complete, we  have $$h \in A^p(\Omega) \cap L^p(\log L)^q(\Omega) =A^p(\log A)^q(\Omega).$$ Thus $A^p(\log A)^q(\Omega)$ is a closed subspace of $L^p(\log L)^q(\Omega)$.
\end{proof}

\begin{remark}Note that  for $p\ge 1, q\ge 0$, $g(t)=t^p(\log^{+} t)^q$ is convex on $[x_0,+\infty)$ for some $x_0>0$. Thus $$h(t):=\max\{0, g(t)-g(t_0)\}$$  is  convex on $\mathbb{R}^+$ and satisfies
$$\int_{\Omega}g(|f|)\mathrm{d}\lambda < \infty \Leftrightarrow \int_{\Omega}h(|f|)\mathrm{d}\lambda < \infty,
$$ 
since $\Omega$ is bounded. So if we choose $h$ as a Young function, then $A^p(\log A)^q(\Omega)$ is still a Banach space.
%These properties may be contained in the theory of Orlicz space, especially (3), but for the completeness, we provide an elementary proof here.
\end{remark}

%These properties are not difficult to check, notice that when $p,q \geq 1$, $x^p(\log^{+}x)^q$ is convex and increasing on $[0,+\infty)$. 

%Note that for unbounded domains the above proposition does not necessarily hold. For example, set $\Omega=\mathbb{C}-\overline{\mathbb{D}}$, $f(z)=1/z$; obviously $f \notin A^2(\Omega)$ but $f \in A^2(\log A)^p(\Omega)$. %再比如在 $\mathbb{C}$ 上，$A^2(\Omega)=\{ 0 \}$ ，但 $\{ f \equiv c ; 0<c<1 \} \subset A^p(\log A)^q$ 总成立，令 $f(z)\equiv \frac{2}{3}$ ，显然 $2f(z) \equiv \frac{4}{3} \notin A^p(\log A)^q$。

%From \cite{Carleson1967}, combining the $(1)$ in the above properties, it is easy to know some simple facts about removable singularities. For example, if $\Omega \subset \mathbb{C}^n $ (may not be bounded), then for any $E \subset \Omega$ which is compact with $Cap(E)=0$, $E$ is removable for $A^2(\log A)^q(\Omega)$. But the equivalent characterization of the removable singularities is still unknown.

In order to prove Theorem 1.3, we need the following result from \cite{Chen2017}.

\begin{prop}\label{pr:2.1}
Let $\Omega \subset \mathbb{C}^n$ be a pseudoconvex domain. Let $\rho$ be a continuous negative plurisubharmonic function on $\Omega$. Set$$\Omega_t =\{ z\in \Omega ; -\rho(z) >t      \},$$where $t>0$. Let $a>0$ be given. For every $r\in (0,1)$, there exist constants $\varepsilon_r, C_r>0$, such that 
$$
\int_{\{-\rho \leq \varepsilon\}} |K_{\Omega}(\cdot,w)|^2\mathrm{d}\lambda \leq C_r K_{\Omega_a}(w) \left( \frac{\varepsilon}{a} \right)^r
$$
for all $w \in \Omega_a$ and $\varepsilon < \varepsilon_r a$.
\end{prop}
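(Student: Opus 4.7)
My plan is to prove the equivalent global weighted bound
\[
\int_\Omega |K_\Omega(\cdot,w)|^2(-\rho)^{-r}\,d\lambda\;\leq\;C_r\,a^{-r}\,K_{\Omega_a}(w,w),
\]
from which the statement follows at once, since $(-\rho)^{-r}\geq\varepsilon^{-r}$ on the shell $\{-\rho\leq\varepsilon\}$. This weighted bound will be derived from two ingredients. (a) The identification $K_\Omega(\cdot,w)=P_\Omega\widetilde K_{\Omega_a}(\cdot,w)$, where $\widetilde K_{\Omega_a}(\cdot,w)$ denotes the extension by zero of $K_{\Omega_a}(\cdot,w)$ from $\Omega_a$ to $\Omega$; this is immediate from the reproducing property on $\Omega_a$, namely $\int_{\Omega_a}h\,\overline{K_{\Omega_a}(\cdot,w)}\,d\lambda=h(w)$ for every $h\in A^2(\Omega)$. (b) Boundedness of the Bergman projection $P_\Omega$ on the weighted space $L^2(\Omega,(-\rho)^{-r})$. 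Given (a) and (b), the target bound follows at once from the trivial estimate $\|\widetilde K_{\Omega_a}\|_{L^2((-\rho)^{-r})}^2\leq a^{-r}K_{\Omega_a}(w,w)$.

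For (b) I would invoke the Donnelly--Fefferman--Berndtsson weighted $L^2$-estimate for $\bar\partial$. The crucial fact is that, for any $s>0$, the function $\phi_s:=-s\log(-\rho)$ is plurisubharmonic on $\Omega$, and a direct calculation gives
\[
i\partial\bar\partial\phi_s \;=\; s\,\frac{i\partial\bar\partial\rho}{-\rho} \,+\, s\,\frac{i\partial\rho\wedge\bar\partial\rho}{\rho^2} \;\geq\; s^{-1}\,i\partial\phi_s\wedge\bar\partial\phi_s,
\]
the Donnelly--Fefferman condition with parameter $s$. Taking $\phi=\phi_s$ and $\psi=\phi_{s+r}$ arranges $e^{\psi-\phi}=(-\rho)^{-r}$, and for suitable $s$ one verifies the DFB condition $(1-\delta)\,i\partial\bar\partial\phi\geq i\partial\psi\wedge\bar\partial\psi$; Berndtsson's estimate then solves $\bar\partial u=v$ with $\int_\Omega|u|^2(-\rho)^{-r}\,d\lambda\leq C_r\int_\Omega|v|^2_{i\partial\bar\partial\phi}(-\rho)^{-r}\,d\lambda$. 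Comparing this DFB-weighted solution to the $L^2$-minimal solution $u=g-P_\Omega g$ of $\bar\partial u=\bar\partial g$ then yields the required weighted boundedness of $P_\Omega$.

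The principal obstacle I anticipate is covering the full range $r\in(0,1)$: the linear ansatz $\phi_s,\phi_{s+r}$ only meets the DFB condition for $r<1/4$ (a short optimization shows the maximum of $\sqrt{s}-s$ over $s>0$ is $1/4$, attained at $s=1/4$). To reach $r\in[1/4,1)$ one must either iterate/bootstrap the estimate, or work with more elaborate pairs of psh weights---for instance incorporating an Ohsawa--Takegoshi-style logarithmic pole $2n\log|z-w|$ at $w$---or replace DFB by H\"ormander's estimate with a strictly plurisubharmonic perturbation. A secondary but routine technicality is that $\rho$ is only continuous, so one regularizes it by smooth psh approximants and passes to the limit so that $i\partial\bar\partial\phi$ has classical meaning in Berndtsson's estimate.
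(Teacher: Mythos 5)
The statement is quoted in the paper as imported from \cite{Chen2017}, so there is no in-text proof here to compare against; I will evaluate your sketch on its merits and against what the cited argument actually does.

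Your step (a) is correct and cleanly argued: for $h\in A^2(\Omega)$ one has $\langle h, P_\Omega\widetilde K_{\Omega_a}\rangle=\int_{\Omega_a}h\,\overline{K_{\Omega_a}(\cdot,w)}=h(w)$ since $h|_{\Omega_a}\in A^2(\Omega_a)$, so $P_\Omega\widetilde K_{\Omega_a}(\cdot,w)=K_\Omega(\cdot,w)$, and the trivial inequality $\|\widetilde K_{\Omega_a}\|^2_{L^2((-\rho)^{-r})}\leq a^{-r}K_{\Omega_a}(w,w)$ is also right. The problem is step (b). You have reduced the Proposition to the boundedness of the unweighted Bergman projection on $L^2(\Omega,(-\rho)^{-r})$ \emph{for every} $r\in(0,1)$, which is a much stronger, universal assertion about $P_\Omega$ acting on arbitrary data --- and it is not true at the generality stated. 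The Berndtsson--Charpentier mechanism you invoke gives such a bound only for $r$ below a threshold tied to the Diederich--Fornaess-type data of $\rho$ (your own computation with $\phi_s,\phi_{s+r}$ lands at $r<1/4$ at best), and the workarounds you mention (bootstrapping, Ohsawa--Takegoshi poles, H\"ormander perturbations) do not obviously lift this to the whole interval $(0,1)$. The Proposition, by contrast, is not claiming anything about $P_\Omega$ on general $L^2$ functions; it only needs a tail estimate for the single function $K_\Omega(\cdot,w)$, and it exploits the extra structure this function has. Your route is therefore not merely a harder version of the right proof --- it is attempting to prove something stronger than the Proposition, and the gap at $r\geq 1/4$ is not cosmetic.

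The argument in \cite{Chen2017} avoids this entirely. Roughly: set $g:=K_\Omega(\cdot,w)$, take a cutoff $\chi$ with $\chi=1$ on $(-\infty,1]$ and $\chi=0$ on $[2,\infty)$, and put $v:=\dbar\bigl(\chi(-\rho/\varepsilon)\,g\bigr)$, supported in $\{\varepsilon\le-\rho\le 2\varepsilon\}$. Writing $\psi:=-\log(-\rho)$, one solves $\dbar u=v$ with the (Donnelly--Fefferman/Berndtsson) weighted estimate relative to the plurisubharmonic weight $\phi:=r\psi$ --- note the exponent is $+r$, so that $e^{-\phi}=(-\rho)^r$, a \emph{bounded} weight, and no DFB threshold is ever encountered --- obtaining
$\int_\Omega|u|^2(-\rho)^r\,d\lambda\lesssim (2\varepsilon)^r\,I(2\varepsilon)$, where $I(\varepsilon):=\int_{\{-\rho\leq\varepsilon\}}|g|^2$. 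Taking $u$ to be the $L^2$-minimal solution, $h:=\chi(-\rho/\varepsilon)g-u\in A^2(\Omega)$, the reproducing property together with $u\perp A^2(\Omega)$ gives $\int\chi(-\rho/\varepsilon)|g|^2=h(w)=-u(w)$ (since $\chi(-\rho(w)/\varepsilon)=0$ for $w\in\Omega_a$, $2\varepsilon<a$). Because $u$ is holomorphic on $\Omega_{2\varepsilon}\supset\Omega_a$, the mean-value inequality yields $|u(w)|^2\leq K_{\Omega_a}(w)\int_{\Omega_a}|u|^2\leq K_{\Omega_a}(w)\,a^{-r}\int_\Omega|u|^2(-\rho)^r$, and one arrives at the recursion
$I(\varepsilon)^2\leq C_r\,(\varepsilon/a)^r\,K_{\Omega_a}(w)\,I(2\varepsilon)$.
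Iterating along the dyadic scales $\varepsilon,2\varepsilon,4\varepsilon,\dots$ up to $\sim a$ and using the trivial bound $I(\cdot)\le K_{\Omega_a}(w)$ then produces $I(\varepsilon)\leq C'_r(\varepsilon/a)^r K_{\Omega_a}(w)$ with no restriction on $r\in(0,1)$. So the two essential ideas you are missing are (i) work with the convex weight $(-\rho)^r$ rather than the singular weight $(-\rho)^{-r}$, which sidesteps the DFB threshold, and (ii) replace the operator-norm reduction by a self-improving iteration on the tail integral of $K_\Omega(\cdot,w)$ itself.
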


\begin{proof}[Proof of Theorem \ref{theorem1.2}]
For every $\alpha \in (0,\alpha_{l}(\Omega))$, there exists a constant $C_\alpha >0$ such that
\begin{equation}\label{eq:2.5.1}
-\varrho \leq C_{\alpha}(-\log \delta)^{-\alpha},  
\end{equation}
where $\varrho=\varrho_{\overline{B}}$ is the relative extremal function of a fixed closed ball $\overline{B}\subset \Omega$.
From Proposition \ref{pr:2.1}, we have
$$
\int_{\{-\varrho \leq \varepsilon\}}|K_{\Omega}(\cdot,w)|^2\mathrm{d}\lambda \leq C\varepsilon^r
$$
for all $0<r<1$, where $w \in \Omega$ is fixed. Here and what in follows we use $C$ to denote all constants depending only on $\alpha , r,w$ and $\Omega$.  
%(the constants appearing in the estimates below are all denoted by $C$, and they only depend on the selection of $n,\alpha,r,S$ at most). 
By (\ref{eq:2.5.1}), we have 
$$\{ (-\log \delta)^{-\alpha} \leq \varepsilon  \} \subset \{ -\varrho \leq C_{\alpha}\varepsilon  \},$$ so that
\begin{equation}\label{eq:2.5.2}
\int_{\{(-\log \delta)^{-\alpha} \leq \varepsilon\}} |K_{\Omega}(\cdot,w)|^2\mathrm{d}\lambda \leq C\varepsilon^r.
\end{equation}
Since $B(z,\delta(z)) \subset \{ \delta \leq 2 \delta(z)  \} =\{ (-\log \delta)^{-\alpha}\leq (-\log (2\delta(z)))^{-\alpha}  \}$, we infer from the mean value inequality that
 \begin{eqnarray}
\notag |K_{\Omega}(z,w)|^2 &\leq &C\delta(z)^{-2n}\int_{\{(-\log \delta)^{-\alpha} \leq (-\log (2\delta(z)))^{-\alpha}\}}|K_{\Omega}(\cdot,w)|^2\mathrm{d}\lambda \\
&\leq &C \delta(z)^{-2n} (-\log (2\delta(z)))^{-\alpha r},   \label{pointwise}
\end{eqnarray}
which implies
$$
\log^{+}|K_{\Omega}(\cdot,w)|\leq \max \left\{ 0,C-n\log \delta(\cdot) -\frac{\alpha r}{2} \log(-\log 2\delta(\cdot)) \right\}.
$$
When $2^{-k-1}\le (-\log \delta(z))^{-\alpha}< 2^{-k} \,( \text{here} \,k \geq k_0$ for some $k_0)$, we have
\begin{equation}\label{eq:2.5.3}
\log^{+}|K_{\Omega}(z,w)| \leq C\cdot 2^{k/\alpha}.
\end{equation}
Hence 
\begin{eqnarray*}
&&\int_{\Omega}|K_{\Omega}(\cdot,w)|^2(\log^{+}|K_{\Omega}(\cdot,w)|)^{q} \mathrm{d}\lambda\\
&&\hspace{1cm} \leq  \left(\int_{\{(-\log \delta)^{-\alpha}> 2^{-k_0}\}} + \sum_{k=k_0}^{\infty}\int_{\{2^{-k-1}\le(-\log \delta)^{-\alpha}<2^{-k}\}}\right)|K_{\Omega}(\cdot,w)|^2(\log^{+}|K_{\Omega}(\cdot,w)|)^{q} \mathrm{d}\lambda \\
&&\hspace{1cm}\leq_{(\ref{eq:2.5.3})} C +\sum_{k=k_0}^{\infty} C\cdot 2^{\frac{kq}{\alpha}}\int_{\{(-\log \delta)^{-\alpha}<2^{-k}\}}|K_{\Omega}(\cdot,w)|^2\mathrm{d}\lambda\\
&&\hspace{1cm}\leq_{(\ref{eq:2.5.2})} C  +\sum_{k=k_0}^{\infty} C\cdot 2^{\frac{kq}{\alpha}}\cdot 2^{-kr} \\
&&\hspace{1cm}\leq C+\sum_{k=k_0}^{\infty} C\cdot 2^{(\frac{q}{\alpha}-r)k}.
\end{eqnarray*}
Thus $\int_{\Omega}|K_{\Omega}(\cdot,w)|^2(\log^{+}|K_{\Omega}(\cdot,w)|)^{q}\mathrm{d}\lambda < +\infty$  when $q<\alpha r$. Since $\alpha <\alpha_{\l}(\Omega)$ and $r\in  (0,1)$ can be chosen arbitrarily, we have $$K_{\Omega}(\cdot,w) \in A^2(\log A)^q(\Omega), \, \, \, \forall \, 0<q< \alpha_{l}(\Omega).$$
\end{proof}

\begin{proof}[Proof of Corollary \ref{corollary6.4}]
We infer from Theorem 1.3 that
$$
 \Lambda:=\{ K_{\Omega}(\cdot, w); w\in \Omega  \} \subset    A^2(\log A)^q(\Omega).
$$
Since $A^2(\log A)^q(\Omega)$ is a linear space, we have
$$
\mathrm{Span} \{ \Lambda \} \subset A^2(\log A)^q(\Omega).
$$
If $f\in A^2(\Omega)$ and $f \perp \overline{\mathrm{Span}\{ \Lambda \}}$, then 
$$
f(w)=\int_{\Omega}f(\zeta)\overline{K_{\Omega}(\zeta,w)}\mathrm{d}\lambda(\zeta)=0
$$
for every $w\in \Omega$, i.e., $f \equiv 0$. In other words,  
$\overline{\mathrm{Span}\{ \Lambda \} } =A^2(\Omega)$.  
So $A^2(\log A)^q(\Omega)$ lies dense in $A^2(\Omega)$.
\end{proof}

\section{Proof of Theorem 1.5}

Set $\Omega^{(s)}=\{ (z,w)\in \mathbb{C}^2; |z| < r_2(|w|), |z-c(|w|)|>r_1(|w|)  \}$, where $r_1, r_2$ and $c_s$ are smooth functions on $[1,6]$ such that
\begin{equation*}
r_1(x)= \begin{cases}
3-\sqrt{x-1} , &\text{as $x \to 1+$}  \\
\text{decreasing}& \text{for $x \in [1,2]$}    \\
1 , &\text{for $x \in [2,5]$}  \\
\text{increasing}& \text{for $x \in [5,6]$}    \\
3-\sqrt{6-x} , &\text{as $x \to 6-$} 
\end{cases}
\end{equation*}
\begin{equation*}
r_2(x)= \begin{cases}
3+\sqrt{x-1} , &\text{as $x \to 1+$}  \\
\text{increasing}& \text{for $x \in [1,2]$}    \\
4 , &\text{for $x \in [2,5]$}   \\
\text{decreasing}& \text{for $x \in [5,6]$}   \\
3+\sqrt{6-x} , &\text{as $x \to 6-$} 
\end{cases}
\end{equation*}
and
\begin{equation*}
c_s(x)= \begin{cases}
0 , &\text{for $x \in [1,2]$}  \\
\text{decreasing}& \text{for $x \in [2,3]$}    \\
\frac{1}{2}e^{-2|x-3|^{-s}}-1 , &\text{for $x$ in a small neighborhood of $3$}   \\
\text{increasing}& \text{for $x \in [3,4]$}    \\
1-\frac{1}{2}e^{-2|x-4|^{-s}} , &\text{for $x$ in a small neighborhood of $4$}  \\
\text{decreasing}& \text{for $x \in [4,5]$}   \\
0 , &\text{for $x \in [5,6]$} .
\end{cases}
\end{equation*}
Clearly, $g(x):=x^2(\log^{+} x)^q$ is increasing on $\mathbb{R}_{+}$ and convex when $x>x_0$ for some $x_0 \in \mathbb{R}_{+}$. Define a convex increasing function on $\mathbb{R}_{+}$ as follows
$$
h(x)=\begin{cases}
g(x_0),  & 0 \leq x \leq x_0\\
g(x),  & x>x_0.
\end{cases}
$$

\begin{lemma} 
If $s\in (0,1)$ and $ q\geq \frac{1}{s}-1$, then $\frac{1}{z} \in A^2(\Omega^{(s)})$ while $\frac{1}{z} \notin A^2(\log A)^q(\Omega^{(s)})$. 
\end{lemma}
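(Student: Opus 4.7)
The plan is to reduce both statements to one-dimensional integrals in $x = |w|$ by first integrating over each $z$-slice of $\Omega^{(s)}$ and over the (trivial) $\arg w$-direction, and then to isolate the contribution of the two ``pinch values'' $x = 3$ and $x = 4$ where the slice almost touches $z = 0$.

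First I would verify that $1/z$ is holomorphic on $\Omega^{(s)}$: the condition $|z - c_s(|w|)| > r_1(|w|)$ rules out $z = 0$ as long as $|c_s(x)| \leq r_1(x)$ for all $x \in [1,6]$, which one checks case by case from the definitions (with equality only at $x = 3,4$, where $|c_s| = 1 = r_1$). For fixed $x \in [1,6]$ set $d(x) := |c_s(x)|$ and $\delta(x) := r_1(x) - d(x)$. Elementary planar geometry shows that the nearest point of the slice $\{z : |z| < r_2(x), |z - c_s(x)| > r_1(x)\}$ to $0$ lies at distance $\delta(x)$. Away from $x = 3,4$ the function $\delta(x)$ is bounded below by a positive constant, so the slice integrals we care about are uniformly bounded there. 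Near $x = 3$, however, $c_s(x) = \tfrac12 e^{-2|x-3|^{-s}} - 1$ is real, and a short calculation gives the \emph{exponentially small} gap
\[
\delta(x) = 1 - d(x) = \tfrac12 e^{-2|x-3|^{-s}},
\]
and similarly near $x = 4$.

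Next I would compute the slice integrals by introducing polar coordinates $\zeta = \rho e^{i\theta}$ centered at the nearest boundary point $z = r_1(x) - d(x)$ (on the real axis). Locally the constraint $|z - c_s(x)| > r_1(x)$ becomes, to leading order, a right half-plane condition, so the slice near the pinch looks like a half-disk in $\zeta$ with $|z| \approx \delta + \rho$. Splitting the $\rho$-integral into $\rho \le \delta$ (where $|z| \asymp \delta$) and $\delta < \rho \lesssim 1$ (where $|z| \asymp \rho$) yields
\[
\int_{\mathrm{slice}} \frac{d\lambda(z)}{|z|^2} \asymp \log(1/\delta) \asymp |x-3|^{-s},
\]
\[
\int_{\mathrm{slice}} \frac{(\log^+|1/z|)^{q}}{|z|^2}\, d\lambda(z) \asymp \bigl(\log(1/\delta)\bigr)^{q+1} \asymp |x-3|^{-s(q+1)},
\]
with analogous estimates near $x = 4$ and bounded slice integrals for $x$ away from $\{3,4\}$.

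Finally I would assemble everything using $d\lambda(z,w) = x\, dx\, d(\arg w)\, d\lambda(z)$ on $\{1 \le |w| \le 6\}$. The first slice estimate gives
\[
\int_{\Omega^{(s)}} \frac{d\lambda}{|z|^2} \lesssim 1 + \int_{|x-3|<\varepsilon} |x-3|^{-s}\, dx + \int_{|x-4|<\varepsilon} |x-4|^{-s}\, dx < \infty
\]
because $s < 1$, proving $1/z \in A^2(\Omega^{(s)})$. The second estimate gives
\[
\int_{\Omega^{(s)}} \frac{(\log^+|1/z|)^q}{|z|^2}\, d\lambda \gtrsim \int_{|x-3|<\varepsilon} |x-3|^{-s(q+1)}\, dx = \infty
\]
when $s(q+1) \ge 1$, i.e.\ $q \ge \tfrac1s - 1$, proving $1/z \notin A^2(\log A)^q(\Omega^{(s)})$. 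The main technical step is the uniform slice estimate at the pinch: one must justify that the slice is well approximated by a half-plane on a scale large enough to capture the full logarithmic contribution, which is done by using that the excluded disk has radius $1$ while $\delta(x) \to 0$, so curvature corrections are negligible on the relevant annulus $\delta \le |z| \lesssim 1$.
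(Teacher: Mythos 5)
Your proposal is correct and follows essentially the same route as the paper: fix $x = |w|$, integrate over each $z$-slice, observe that away from $x = 3, 4$ the slice integrals are uniformly bounded, and reduce to one-dimensional integrals in $x$ near $3$ and $4$ that converge or diverge according to whether $s(q+1) < 1$ or $\ge 1$. The difference lies in how the slice integrals are estimated. You take a local geometric view near the pinch: after identifying the gap $\delta(x) = \tfrac12 e^{-2|x-3|^{-s}}$, you approximate the slice near $z=0$ by a half-plane in polar coordinates and obtain $\int_{\mathrm{slice}} |z|^{-2}\,d\lambda \asymp \log(1/\delta)$ and $\int_{\mathrm{slice}} |z|^{-2}(\log^+|1/z|)^q\,d\lambda \asymp (\log(1/\delta))^{q+1}$. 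This is correct, but the half-plane approximation needs a small justification (the slice is the complement of a unit disk, and the angular width $2\arccos(A(r))$ is bounded away from $0$ and $\pi$ only for $\delta \ll |z| \ll 1$). The paper instead uses exact annular inclusions, avoiding any approximation: for the upper bound, $\Omega^{(s)}_{3+t} \cup \Omega^{(s)}_{4+t} \subset \{\tfrac12 e^{-2|t|^{-s}} < |z| < 4\}$; for the lower bound, the clean observation that $\{e^{-|t|^{-s}} < |z| < 1\} \subset \Omega^{(s)}_{3+t} \cup \Omega^{(s)}_{4+t}$, which follows from a symmetry trick: if $z$ lies in both excluded unit disks centered at $\pm a$ with $a = 1 - \tfrac12 e^{-2|t|^{-s}}$, then $2|z|^2 + 2a^2 = |z+a|^2 + |z-a|^2 \le 2$, so $|z|^2 \le 1 - a^2 \le e^{-2|t|^{-s}}$. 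Your approach is more conceptual about the local geometry at the pinch; the paper's is slightly more elementary and self-contained since it produces exact set inclusions rather than asymptotic estimates. Both land on the same one-dimensional criterion.
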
 
\begin{proof} 
Define $\Omega^{(s)}_w:=\{ z \in \mathbb{C}; (z,w)\in \Omega^{(s)} \}$. Clearly, $\Omega^{(s)}_w=\Omega^{(s)}_{|w|}$.

First of all, we have
\begin{eqnarray*}
\int_{\Omega^{(s)}}\frac{1}{|z|^2}\mathrm{d}\lambda(z,w)&=&\int_{\{1\leq|w|\leq 6\}}\mathrm{d}\lambda(w) \int_{\Omega^{(s)}_w}\frac{1}{|z|^2}\mathrm{d}\lambda(z)     \\
&=&2\pi \int_1^6 t \mathrm{d}t \int_{\Omega^{(s)}_t}\frac{1}{|z|^2}\mathrm{d}\lambda(z) \\
&=& 2\pi \left( \int_{ \{ t\in [1,6], |t-3|>\varepsilon, |t-4|>\varepsilon    \}  } +\int_{3-\varepsilon}^{3+\varepsilon} +\int_{4-\varepsilon}^{4+\varepsilon}     \right) t \mathrm{d}t \int_{\Omega^{(s)}_t}\frac{1}{|z|^2}\mathrm{d}\lambda(z) \\
&\leq& C_{\varepsilon}+2\pi \int_{-\varepsilon}^{\varepsilon}(3+t)\mathrm{d}t\int_{\Omega^{(s)}_{3+t}}\frac{1}{|z|^2}\mathrm{d}\lambda+2\pi \int_{-\varepsilon}^{\varepsilon}(4+t)\mathrm{d}t\int_{\Omega^{(s)}_{4+t}}\frac{1}{|z|^2}\mathrm{d}\lambda(z) \\
&\leq& C_{\varepsilon}+4\pi \int_{-\varepsilon}^{\varepsilon}(4+t)\mathrm{d}t \int_{ \Omega^{(s)}_{3+t}\cup \Omega^{(s)}_{4+t}  } \frac{1}{|z|^2}\mathrm{d}\lambda(z)  \\
&\leq& C_{\varepsilon}+4\pi \int_{-\varepsilon}^{\varepsilon}(4+t)\mathrm{d}t \int_{\{ \frac{1}{2}e^{-2|t|^{-s}}<|z|<4  \}} \frac{1}{|z|^2}\mathrm{d}\lambda(z)  \\
&\leq& C_{\varepsilon}+ 8\pi^2 \int_{-\varepsilon}^{\varepsilon} (4+t)(\log 8 +2|t|^{-s})\mathrm{d}t \\
&<& +\infty.
\end{eqnarray*}

On the other hand, since $\Omega^{(s)}_{3+t}\cup \Omega^{(s)}_{4+t} \supset \{ z\in \mathbb{C}; e^{-|t|^{-s}}<|z|<1 \}$, we have
\begin{eqnarray*}
\int_{\Omega^{(s)}}\frac{1}{|z|^2} \left( \log^{+}\frac{1}{|z|}  \right)^q \mathrm{d}\lambda(z,w) &\geq& 2\pi \int_1^6 t\mathrm{d}t \int_{\Omega^{(s)}_t}\frac{1}{|z|^2}\left( \log^{+} \frac{1}{|z|}  \right)^q \mathrm{d}\lambda(z) \\
&\geq & 2\pi \left( \int_{3-\epsilon}^{3+\epsilon}+\int_{4-\epsilon}^{4+\epsilon}   \right)\mathrm{d}t \int_{\Omega^{(s)}_t}\frac{1}{|z|^2}\left( \log^{+} \frac{1}{|z|}  \right)^q \mathrm{d}\lambda(z) \\
&\geq & 2\pi \int_{-\varepsilon}^{\varepsilon}\mathrm{d}t \int_{\Omega^{(s)}_{3+t}\cup \Omega^{(s)}_{4+t}} \frac{1}{|z|^2}\left( \log^{+}\frac{1}{|z|}  \right)^q  \mathrm{d}\lambda(z)\\
&\geq & 2\pi \int_{-\varepsilon}^{\varepsilon}\mathrm{d}t \int_{\{ e^{-|t|^{-s}}<|z|<1\} } \frac{1}{|z|^2}\left( \log\frac{1}{|z|}  \right)^q  \mathrm{d}\lambda(z)\\
&=&4\pi^2 \int_{-\varepsilon}^{\varepsilon}   \frac{1}{q+1} (|t|^{-s})^{q+1}\mathrm{d}t \\
&= & +\infty.
\end{eqnarray*}
\end{proof}

\begin{proof}[Proof of Theorem 1.5]
$(1)$ Suppose on the contrary that $A^2(\log A)^q(\Omega^{(s)}) $ lies dense in $A^2(\Omega^{(s)})$, then there exists a sequence $\{ \widetilde{f}_n \} \subset A^2(\log A)^q(\Omega^{(s)})$ such that $\| \widetilde{f}_n - \frac{1}{z}\|_{L^2(\Omega^{(s)})} \to 0$ $(n \to \infty)$.  Set 
$$
f_n(z,w)=\frac{1}{2\pi}\int_{0}^{2\pi}\widetilde{f}_n(z,e^{i\theta}w) \mathrm{d}\theta.
$$ 

{\it Step 1}. We shall verify that $f_n \in A^2(\log A)^q(\Omega^{(s)})$ and $\|f_n-\frac{1}{z}\|_{L^2(\Omega^{(s)})}\to 0$. 

First of all, we have
\begin{eqnarray*}
\int_{\Omega^{(s)}} |f_n|^2(\log^{+}|f_n|)^q \mathrm{d}\lambda(z,w) &=& \int_{\Omega^{(s)}} g\left( \left|\frac{1}{2\pi}\int_{0}^{2\pi} \widetilde{f_n}(z,e^{i\theta}w)\mathrm{d}\theta    \right| \right)   \mathrm{d}\lambda(z,w)    \\
&\leq& \int_{\Omega^{(s)}}g\left(\frac{1}{2\pi}\int_0^{2\pi}|\widetilde{f}_n(z,e^{i\theta}w)|\mathrm{d}\theta \right) \mathrm{d}\lambda(z,w)\\
&\leq& \int_{\Omega^{(s)}}h\left(\frac{1}{2\pi}\int_0^{2\pi}|\widetilde{f}_n(z,e^{i\theta}w)|\mathrm{d}\theta \right) \mathrm{d}\lambda(z,w)\\
&\leq& \int_{\Omega^{(s)}}\frac{1}{2\pi}\int_{0}^{2\pi}h(|\widetilde{f}_n(z,e^{i\theta}w)|)\mathrm{d}\theta \mathrm{d}\lambda(z,w)\\
&=&\frac{1}{2\pi}\int_{0}^{2\pi} \int_{\Omega^{(s)}} h(|\widetilde{f}_n(z,e^{i\theta}w)|)\mathrm{d}\lambda(z,w)\mathrm{d}\theta \\
&=& \int_{\Omega^{(s)}} h(|\widetilde{f}_n(z,w)|)\mathrm{d}\lambda(z,w) \\
&\leq&g(t_0)|\Omega^{(s)}|+\int_{\{|\widetilde{f}_n|> t_0\}}g(|\widetilde{f}_n(z,w)|) \mathrm{d}\lambda(z,w) \\
&\leq&g(t_0)|\Omega^{(s)}|+\int_{\Omega^{(s)}}|\widetilde{f}_n(z,w)|^2(\log^{+}|\widetilde{f}_n(z,w)|)^q \mathrm{d}\lambda(z,w) \\
&<&\infty,
\end{eqnarray*}
where the fourth inequality follows from Jensen's inequality, and the sixth equality follows from the fact that the inner integral is independent of $\theta$. Thus $f_n \in A^2(\log A)^q(\Omega^{(s)})$. 

Next, we have 
\begin{eqnarray*}
\int_{\Omega^{(s)}}\left|f_n-\frac{1}{z}\right|^2 \mathrm{d}\lambda(z,w)&=&\int_{\Omega^{(s)}} \left|\frac{1}{2\pi}\int_{0}^{2\pi}\left(\widetilde{f}_n(z,e^{i\theta}w)-\frac{1}{z}\right)\mathrm{d}\theta \right|^2 \mathrm{d}\lambda(z,w) \\
&\leq& \frac{1}{2\pi}\int_{\Omega^{(s)}}\int_0^{2\pi}\left| \widetilde{f}_n(z,e^{i\theta}w)-\frac{1}{z}   \right|^2\mathrm{d}\theta \mathrm{d}\lambda(z,w) \\
&=& \frac{1}{2\pi}\int_0^{2\pi}\int_{\Omega^{(s)}}\left| \widetilde{f}_n(z,e^{i\theta}w)-\frac{1}{z}   \right|^2 \mathrm{d}\lambda(z,w)\mathrm{d}\theta \\
&=&\frac{1}{2\pi}\int_0^{2\pi} \left\| \widetilde{f}_n(z,e^{i\theta}w)-\frac{1}{z} \right\|^2_{L^2(\Omega^{(s)})}\mathrm{d}\theta \\
&=& \left\| \widetilde{f}_n(z,w)-\frac{1}{z} \right\|^2_{L^2(\Omega^{(s)})} \to 0 \hspace{0.2cm} (n \to \infty),
\end{eqnarray*}
where the second inequality follows from Schwarz's inequality.
 So $f_n \to \frac{1}{z}$ in $L^2(\Omega^{(s)})$. 

{\it Step 2}. We claim that each $f_n$ is independent of $w$, and  can be extended to a holomorphic function on $\{|z|<4\}$.

In fact, for every $z \in \mathbb{C}$ with $|z|=3$, we conclude that
\begin{eqnarray*}
f_n(z,w)&=&\frac{1}{2\pi}\int_{0}^{2\pi}\widetilde{f}_n(z,e^{i\theta}w) \mathrm{d}\theta\\
&=&\frac{1}{2\pi} \int_{\{|\zeta|=1\}}\frac{\widetilde{f}_n(z,w\zeta)}{\zeta}\mathrm{d}\zeta \\
&=&\frac{1}{2\pi} \int_{\{|\zeta|=|w|\}}\frac{\widetilde{f}_n(z,\zeta)}{\zeta}\mathrm{d}\zeta
\end{eqnarray*}
is independent of $w$ when $1<|w|<6$, in view of Cauchy's theorem. Now for any $w_1, w_2 \in \{  1<|w|<6 \}$, the function $f_n(\cdot,w_1)-f_n(\cdot, w_2)$ is holomorphic, and vanishes on $\{ |z|=3 \}$, we infer from the identity theorem that $f_n(\cdot,w_1)-f_n(\cdot, w_2) \equiv 0$, i.e., $f_n$ is independent of $w$, thus we have $$f_n \in \mathcal{O}(\{ 0<|z|<4 \}).$$ Write $f_n(z) = \sum_{m=-\infty}^{+\infty}a_m^{(n)}z^m$. Since $f_n \in A^2(\log A)^q(\Omega^{(s)}) \subset A^2(\Omega^{(s)})$, we have
\begin{eqnarray*}
+\infty &>& \int_{\Omega^{(s)}} |f_n(z)|^2 \mathrm{d}\lambda(z,w)\\
&=&2\pi \int_1^6 t\mathrm{d}t \int_{\Omega^{(s)}_t} |f_n(z)|^2\mathrm{d}\lambda(z) \\
&\geq &2\pi \int_{-\varepsilon}^{\varepsilon}\mathrm{d}t \int_{\Omega^{(s)}_{3+t}\cup \Omega^{(s)}_{4+t}}|f_n(z)|^2\mathrm{d}\lambda(z) \\
&\geq&  2\pi \int_{-\varepsilon}^{\varepsilon}\mathrm{d}t \int_{\{ e^{-|t|^{-\frac{1}{2}}}<|z|<4\}} \sum_{m=-\infty}^{+\infty} |a_m^{(n)}|^2|z|^{2m} \mathrm{d}\lambda(z) \\
&=& 4\pi^2 \int_{-\varepsilon}^{\varepsilon}\mathrm{d}t \int_{e^{-|t|^{-\frac{1}{2}}}}^4 \sum_{m=-\infty}^{\infty}|a_m^{(n)}|^2\cdot r^{2m+1} \mathrm{d}r \\
&=& 4\pi^2\sum_{m \neq -1} \int_{-\varepsilon}^{\varepsilon}   |a_m^{(n)}|^2\frac{4^{2m+2}-e^{-(2m+2)|t|^{-\frac{1}{2}}}}{2m+2} \mathrm{d}t +4\pi^2 \int_{-\varepsilon}^{\varepsilon}|a^{(n)}_{-1}|^2(\log 4 +|t|^{-\frac{1}{2}})     \mathrm{d}t .
\end{eqnarray*}
When $m \leq -2$, $e^{-(2m+2)|t|^{-\frac{1}{2}}}$ is not integrable, so $a_{m}^{(n)}=0$. Thus we may write $$f_n(z)=\frac{a_{-1}^{(n)}}{z}+g_n(z),$$ where $g_n$ is holomorphic on $\{|z|<4\}$. Notice that $\frac{1}{z}$ is bounded on $\{|z|\geq \varepsilon\}$, so 
$$\frac{a_{-1}^{(n)}}{z} \in A^2(\log A)^q(\Omega^{(s)} \cap \{ |z|\geq \varepsilon \}).$$ 
 Also, since $f_n \in A^2(\log A)^q(\Omega^{(s)})$ and $g_n$ is bounded on $\{ |z|< \varepsilon \}$, it follows from Proposition \ref{prop:3.1}$/(1)$ we have 
$$\frac{a_{-1}^{(n)}}{z} \in A^2(\log A)^q(\Omega^{(s)} \cap \{ |z|<\varepsilon \}).$$
 So $\frac{a_{-1}^{(n)}}{z}$ lies in $A^2(\log A)^q(\Omega^{(s)})$. But we have already known that $\frac{1}{z}$ is not in $A^2(\log A)^q(\Omega^{(s)})$, thus $a_{-1}^{(n)}$ must be zero, and $f_n$ is in fact a holomorphic function on $\{|z|<4\}$, which concludes the proof of the claim. 

Since $f_n \to \frac{1}{z}$ in $L^2(\Omega^{(s)})$, the convergence holds uniformly on 
$\{(z,w_0); |z|=3\}$, where $w_0$ is a fixed point in $\{ 1<|w|<6 \}$. So we have
$$
0=\int_{\{(z,w_0); |z|=3\}} f_n(z) \mathrm{d}z \to \int_{\{(z,w_0); |z|=3\}} \frac{1}{z} \mathrm{d}z=2\pi i,
$$
which is absurd. Thus $A^2(\log A)^q(\Omega^{(s)})$ is not dense in $A^2(\Omega^{(s)})$.

(2) Suppose on the contrary that $K_{\Omega^{(s)}}(\cdot, w) \in A^2(\log A)^q(\Omega^{(s)})$ for every $w\in \Omega^{(s)}$. Then an analogous argument as the proof of Corollary 1.4 shows that $A^2(\log A)^q(\Omega^{(s)})$ is dense in $A^2(\Omega^{(s)})$, a contradiction to $(1)$.
\end{proof}

\section{Appendix}

We shall construct a bounded planar domain $\Omega$ with $\alpha(\Omega)=0$ while $\alpha_l(\Omega)>0$, using Corollary 3.4 of \cite{ChenDensity} and Theorem 1.1 of \cite{CarlesonTotik2004}. Set $D(a,r)=\{|z-a|<r\}$, and $E_r(a)=\overline{D}(a,r)-\Omega$. We denote by $C_l(E)$  the logarithmic capacity of a compact set $E\subset \mathbb{C}$.

\begin{definition}[\cite{CarlesonTotik2004}]
For a compact set $E\subset \mathbb{R}$, $a\in E$ and $\varepsilon>0,$ set 
$$
\mathcal{N}_E(a,\varepsilon)=\{ n\in \mathbb{N}; C_l(E_{2^{-n}}(a)-\overline{D}(a,2^{-n-1}))\geq \varepsilon \cdot 2^{-n}\}, 
$$
and we say that $\mathcal{N}_E(a,\varepsilon)$ is of positive lower density if
$$
\liminf_{N\to \infty} \frac{|\mathcal{N}_E(\varepsilon)\cap \{ 1,2, \cdots, N\}|}{N+1}>0.
$$
\end{definition}

\begin{theorem}[\cite{CarlesonTotik2004}, Theorem 1.1]
For a compact set $E\subset \mathbb{R}$ the Green's function $g_{\overline{\mathbb{C}}-E}(\cdot, \infty)$ is H\"older continuous at 0 if and only if $\mathcal{N}_E(0,\varepsilon)$ is of positive lower density for some $\varepsilon >0$.
\end{theorem}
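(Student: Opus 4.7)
The plan is to treat the equivalence as a scale-by-scale iteration across dyadic radii $r_n=2^{-n}$. The three quantities I would link at each scale are the local supremum $M_n:=\sup\{g_{\overline{\mathbb{C}}\setminus E}(z,\infty):|z|\le r_n\}$ of the Green's function, the harmonic measure
$$
\omega_n(z):=\omega\bigl(z,\partial D(0,r_n);\,D(0,r_n)\setminus E\bigr)
$$
of the outer circle seen from points of modulus close to $r_{n+1}$, and the logarithmic capacity appearing in the definition of $\mathcal{N}_E(0,\varepsilon)$. The two-constants theorem converts $\omega_n$-estimates into multiplicative control of $M_n$, while standard capacity/harmonic-measure comparisons (Wiener-type criteria, or, for real sets, direct projection-and-symmetrization) convert capacity information into harmonic-measure bounds.

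\textbf{Density $\Rightarrow$ H\"older.} First I would prove that if $n\in\mathcal{N}_E(0,\varepsilon)$ then $\omega_n(z)\le 1-c(\varepsilon)$ for every $z\in\partial D(0,r_{n+1})\setminus E$, with $c(\varepsilon)>0$ depending only on $\varepsilon$. In the real-line situation this reduces via symmetrization to the case where $E$ contains an interval of length $\gtrsim\varepsilon\,r_n$ inside the annulus $\{r_{n+1}<|z|\le r_n\}$, where the bound is explicit. Applying the two-constants theorem to $g_{\overline{\mathbb{C}}\setminus E}(\cdot,\infty)$ on $D(0,r_n)\setminus E$ then yields $M_{n+1}\le(1-c(\varepsilon))M_n$ for $n\in\mathcal{N}_E(0,\varepsilon)$, while $M_{n+1}\le M_n$ at the remaining scales by the maximum principle. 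Iterating and invoking positive lower density $\rho$ gives $M_N\le M_0(1-c(\varepsilon))^{\rho N}\le C\,r_N^{\alpha}$ for some $\alpha>0$, which is the desired H\"older continuity at $0$.

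\textbf{H\"older $\Rightarrow$ density.} Here I would argue the contrapositive: if $\mathcal{N}_E(0,\varepsilon)$ has zero lower density for every $\varepsilon>0$, then for arbitrarily small $\varepsilon$ one can find subsequences $N_k\to\infty$ along which $C_l\bigl(E_{r_n}(0)\setminus\overline{D}(0,r_{n+1})\bigr)<\varepsilon\,r_n$ for $(1-o(1))N_k$ indices $n\le N_k$. At such thin scales the reverse comparison $\omega_n(z)\ge 1-\eta(\varepsilon)$ with $\eta(\varepsilon)\to 0$ holds, and the lower-bound half of the two-constants principle (applied to $g_{\overline{\mathbb{C}}\setminus E}(\cdot,\infty)$ against its harmonic minorant on $D(0,r_n)\setminus E$) gives $M_{n+1}\ge(1-\eta(\varepsilon))M_n$. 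Multiplying along these thin scales yields $M_{N_k}\ge M_0(1-\eta(\varepsilon))^{(1-o(1))N_k}$, which for sufficiently small $\varepsilon$ beats every polynomial decay $r_{N_k}^{\alpha}$ and so rules out H\"older continuity at $0$.

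\textbf{Main obstacle.} The principal difficulty is the sharp two-sided translation between logarithmic capacity of the obstruction in a dyadic annulus and the planar harmonic measure across that annulus, with constants uniform in $n$ and depending only on $\varepsilon$. Secondary care is needed to handle irregular boundary points of $E$ (where $g_{\overline{\mathbb{C}}\setminus E}(\cdot,\infty)$ need not vanish on $E$), to pass cleanly between the Green's function on $\overline{\mathbb{C}}\setminus E$ and its restriction to $D(0,r_n)\setminus E$, and to ensure that the dyadic iteration closes without accumulating uncontrolled polynomial factors.
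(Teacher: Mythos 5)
The paper does not prove this result; it is quoted verbatim from Carleson--Totik \cite{CarlesonTotik2004} with a citation, so there is no in-paper proof to compare against. Evaluating your proposal on its own terms: the forward implication (positive lower density $\Rightarrow$ H\"older) is essentially sound in outline. You do still owe the quantitative lemma that $C_l\bigl(E_{r_n}(0)\setminus\overline{D}(0,r_{n+1})\bigr)\geq\varepsilon r_n$ forces $\omega_n\leq 1-c(\varepsilon)$ on $\partial D(0,r_{n+1})$ with $c(\varepsilon)$ depending only on $\varepsilon$; but for $E\subset\mathbb{R}$ this follows from Beurling projection plus an explicit computation for an interval in a dyadic annulus, and the subsequent two-constants iteration and the maximum principle on $D(0,r_{n+1})\setminus E$ then do close the argument as you describe.

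The converse direction, however, has a genuine gap. You define $\omega_n$ as the harmonic measure of $\partial D(0,r_n)$ relative to the \emph{full} domain $D(0,r_n)\setminus E$, and you claim that thinness of $E$ in a single annulus $\{r_{n+1}<|z|\leq r_n\}$ forces $\omega_n\geq 1-\eta(\varepsilon)$ on $\partial D(0,r_{n+1})$. That is false in general: $\omega_n$ sees the whole of $E\cap D(0,r_n)$, and if $E\cap D(0,r_{n+1})$ is thick then $\omega_n$ can be uniformly small on $\partial D(0,r_{n+1})$ no matter how thin the outer annulus is. To use only annular information you must localize harmonic measure to $A_n=\{r_{n+1}<|z|<r_n\}\setminus E$, but then the outer-boundary harmonic measure vanishes \emph{identically} on $|z|=r_{n+1}$, so the comparison has to be made at an interior radius and requires a Harnack step that degenerates when $E$ approaches the reference circle. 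Moreover, the localized two-constants identity now reads $g\geq m_n\,\omega(\cdot,\partial D(0,r_n);A_n\setminus E)+m_{n+1}\,\omega(\cdot,\partial D(0,r_{n+1});A_n\setminus E)$, which couples $m_{n+1}$ back into the recursion rather than giving the clean multiplicative lower bound $M_{n+1}\geq(1-\eta)M_n$ you invoke. Finally, even granting a corrected recursion, you would still need a lower bound for the outer-boundary harmonic measure in terms of \emph{only} the annular capacity of $E$, uniformly in $n$, which is a substantially harder estimate than its forward counterpart. These are precisely the places where Carleson and Totik bring in the special structure of $E\subset\mathbb{R}$ and work with polynomial inverse images, Markov-type inequalities, and a careful comparison set construction rather than a naive harmonic-measure iteration; as it stands your converse argument does not go through.
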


\begin{definition}[\cite{ChenDensity}]
For $\varepsilon>0, 0<\lambda<1$ and $ \gamma >1$ we set
\begin{eqnarray*}
&&\mathcal{N}_a(\varepsilon, \lambda,\gamma):= \{ n \in \mathbb{Z}^{+} ; C_l(E_{\lambda^n}(a))\geq \varepsilon \lambda^{\gamma n}   \} \\
&&\mathcal{N}_a^n(\varepsilon, \lambda,\gamma):=\mathcal{N}_a(\varepsilon, \lambda,\gamma) \cap \{1,2,\dots,n \}.
\end{eqnarray*}
We define the $\gamma$-capacity density of $\partial \Omega$ at $a$ by
$$\mathcal{D}_a(\varepsilon,\lambda,\gamma)=\liminf_{n\to \infty}\frac{\sum_{k\in \mathcal{N}_a^n(\varepsilon,\lambda,\gamma)}k^{-1}}{\log n},$$
and the  $\gamma$-capacity density of $\partial \Omega$ by $$\mathcal{D}_W(\varepsilon, \lambda, \gamma)=\liminf_{n\to \infty}\frac{\inf_{a\in \partial \Omega}  \sum_{k\in \mathcal{N}_a^n(\varepsilon,\lambda,\gamma)}k^{-1}   }{\log n}.$$
\end{definition}

\begin{theorem}[\cite{ChenDensity}, Corollary 3.4]
If $\mathcal{D}_W(\epsilon, \lambda, \gamma)>0$ for some $\epsilon, \lambda, \gamma$, then there exists $\beta>0$ such that 
$$
\phi_K(z)\leq (-\log \delta(z))^{-\beta}
$$ 
for all $z$ sufficiently close to $\partial \Omega$, where $\phi_K$ denotes the capacity potential of a compact subset $K$ relative to $\Omega$.
\end{theorem}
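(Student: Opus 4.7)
My plan is to exhibit a single negative subharmonic function $u$ on $\Omega$ satisfying $u|_K\le -1$ together with $-u(z)\lesssim (-\log\delta(z))^{-\beta}$ near $\partial\Omega$; the extremal definition of $\phi_K$ then forces $\phi_K\le -u$, which gives the claim with the same exponent $\beta$.

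First I would produce per-scale building blocks. For each $a\in \partial\Omega$ and each $n\in \mathcal{N}_a(\varepsilon,\lambda,\gamma)$, the capacity bound $C_l(E_{\lambda^n}(a))\ge \varepsilon \lambda^{\gamma n}$, combined with the asymptotics
$$
g_{\mathbb{C}\setminus E_{\lambda^n}(a)}(z,\infty)=\log|z-a|-\log C_l(E_{\lambda^n}(a))+o(1)
$$
at infinity, forces this Green's function to be bounded by $M_n:=(\gamma-1)n\log(1/\lambda)+O(1)$ on the circle $\{|z-a|=\lambda^{n-1}\}$. After dividing by $M_n$, subtracting $1$, and extending by $0$ outside the disk of radius $\lambda^{n-1}$, I obtain a negative subharmonic function $h_{a,n}$ on $\Omega$ vanishing away from $a$, satisfying $h_{a,n}\le -1+O(1/n)$ on a neighbourhood of $E_{\lambda^n}(a)$, and of size $O(1/n)$ at distance $\sim \lambda^n$ from $a$.

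Next I would average across scales. With $N:=\lfloor -\log\delta(z)/\log(1/\lambda)\rfloor$, set
$$
v_a^{N}(z):=\frac{1}{\sum_{k\in \mathcal{N}_a^N}k^{-1}}\sum_{k\in \mathcal{N}_a^N}\frac{h_{a,k}(z)}{k}.
$$
The hypothesis $\mathcal{D}_a\ge \mathcal{D}_W(\varepsilon,\lambda,\gamma)>0$ yields $\sum_{k\in \mathcal{N}_a^N}k^{-1}\gtrsim \log N$, and tracking the scales on which the $h_{a,k}$ have already decayed gives $-v_a^N(z)\lesssim (\log N)^{-1}\sim (-\log \delta(z))^{-1}$ at $z$ with nearest boundary point $a$. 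A standard bootstrap (e.g.\ raising $v_a^N$ to an appropriate convex power calibrated by $\gamma$) converts the exponent $1$ into a $\beta=\beta(\varepsilon,\lambda,\gamma)>0$ determined by how the relevant capacity scales $\lambda^{\gamma n}$ interact with the geometric radii $\lambda^n$.

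The main obstacle, and the step that genuinely needs $\mathcal{D}_W$ rather than only the pointwise $\mathcal{D}_a$, is converting these per-point families $\{v_a^N\}_{a\in\partial\Omega}$ into one global competitor. I would take $u$ to be the upper semicontinuous regularization of $\sup_{a\in\partial\Omega} v_a^{N(\cdot)}$ on $\Omega$; for this sup to remain subharmonic and, more importantly, to retain the decay rate with constants \emph{independent of $a$}, one must verify that whenever two boundary points $a,a'$ are close, the corresponding building blocks $h_{a,k}$ and $h_{a',k}$ are quantitatively comparable on their overlap, and that the lower bound on $\sum_{k\in \mathcal{N}_a^N}k^{-1}$ is uniform in $a$. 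The uniform density hypothesis $\mathcal{D}_W(\varepsilon,\lambda,\gamma)>0$ is exactly what makes both of these go through, and once the global $u$ is in hand, the extremal property of $\phi_K$ closes the argument.
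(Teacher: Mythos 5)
Note first that this statement is imported verbatim from \cite{ChenDensity} (Corollary 3.4); the present paper gives no proof of it, so there is no in-text argument to compare your attempt against, and I can only evaluate it on its own terms. The very first building block is already problematic. Setting $\widetilde h:=g_{\mathbb C\setminus E_{\lambda^n}(a)}(\cdot,\infty)/M_n-1$ on $D(a,\lambda^{n-1})\cap\Omega$, the maximum principle (the Green's function has its pole at infinity, not in the disk) gives $g\le M_n$ on the \emph{whole} disk, so $\widetilde h\le 0$ there, with equality on $\partial D(a,\lambda^{n-1})$ only where $g$ attains its supremum. ``Extending by $0$'' therefore produces an upward jump across the rest of $\partial D(a,\lambda^{n-1})$ and the result is not subharmonic, while the only subharmonic truncation, $\max(\widetilde h,0)$, is identically zero. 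Even granting suitable barriers, the claimed estimate $-v_a^N(z)\lesssim(\log N)^{-1}$ does not follow: for $z$ at distance $\sim\lambda^N$ from its nearest boundary point $a$, \emph{every} $h_{a,k}(z)$ with $k\le N$ has $z$ inside its support and can be arbitrarily close to $-1$ (for instance when $E_{\lambda^k}(a)$ clusters near that nearest boundary point), and a convex combination of functions all equal to $-1$ at $z$ equals $-1$ there, regardless of the weights $k^{-1}/\sum k^{-1}$. Letting $N=N(z)$ vary with the evaluation point further destroys subharmonicity of $v_a^{N(\cdot)}$, the ``raise to a convex power'' step is both unneeded (an exponent-$1$ estimate already implies the theorem, which asks only for \emph{some} $\beta>0$) and ill-defined with respect to subharmonicity, and the normalization $u|_K\le -1$ is never addressed.

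The structural gap is that the phenomenon is inherently \emph{multiplicative}, not additive. The mechanism behind \cite{CarlesonTotik2004} and Corollary 3.4 of \cite{ChenDensity} is a one-scale comparison that is then iterated: with $M(t):=\sup_{\Omega\cap\partial D(a,t)}\phi_K$, the Green's function of $\overline{\mathbb C}\setminus E_{\lambda^n}(a)$ is used to show $M(\lambda t)\le\bigl(1-c/\log(t/C_l(E_t(a)))\bigr)M(t)$, which for $n\in\mathcal N_a(\varepsilon,\lambda,\gamma)$ gives a per-scale loss factor of the form $1-c'/n$. The product over $n\in\mathcal N_a^N$ is then $\lesssim\exp\bigl(-c'\sum_{n\in\mathcal N_a^N}n^{-1}\bigr)\lesssim N^{-\beta}$, where the density hypothesis $\mathcal D_W(\varepsilon,\lambda,\gamma)>0$ supplies the uniform-in-$a$ lower bound on $\sum_{n\in\mathcal N_a^N}n^{-1}/\log N$ and hence a uniform $\beta>0$. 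Your additive averaging collapses exactly the compounding that this product records; the correct role of $\sum k^{-1}\gtrsim\log N$ is as an exponent in an exponential, not as a normalizing denominator.
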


\begin{example}
Let $\Omega = D(0,3)-E,$ where $E=  \bigcup_{n=0}^{\infty}[2^{-n},2^{-n}+2^{-2 n}]\cup \{0\} $. Then $\alpha(\Omega)=0$ and $\alpha_l(\Omega)>0$.
\end{example}
\begin{proof}
We first calculate $\mathcal{N}_a^n(\frac{1}{16}, \frac{1}{2},2)$:

The case $a=0$ is simple, for
$$C_l(E_{2^{-n}}(0))\geq C_l([2^{-n-1},2^{-n-1}+2^{-2(n+1)}])= \frac{1}{4}\cdot 2^{-2(n+1)}= \frac{1}{16}\cdot 2^{-2n}.$$

The case $a \in [2^{-n_0},2^{-n_0}+2^{-2 n_0}]$ is divided into three parts:

(i) If $n\leq n_0 -1$, then $0\in \overline{D}(a,2^{-n})$. It is easy to see that $E_{2^{-n}}(0) \subset E_{2^{-n}}(a)$. Thus $$C_l(E_{2^{-n}}(a)) \geq C_l(E_{2^{-n}}(0)) \geq \frac{1}{16}\cdot 2^{-2n};$$

(ii) If $n_0 \leq n \leq 2n_0$, then $[2^{-n_0}, 2^{-n_0}+2^{-2n_0}] \subset \overline{D}(a,2^{-n})$, so $$C_l(E_{2^{-n}}(a))\geq \frac{1}{4}\cdot2^{-2n_0}\geq \frac{1}{4}\cdot 2^{-2 n}\geq \frac{1}{16}\cdot 2^{-2n} ; $$

(iii) If $n \geq 2n_0+1$, then $\overline{D}(a,2^{-n}) \cap [2^{-n_0},2^{-n_0}+2^{-2n_0}]$ contains an interval with length $2^{-n}$, so $$C_l(E_{2^{-n}}(a))\geq \frac{1}{4}\cdot 2^{-n}\geq \frac{1}{16}\cdot 2^{-2n}.$$

If $a\in \partial D(0,3)$, then $E_{2^{-n}}(a)$ contains an interval with length $2^{-n+1}$. Thus $$C_l(E_{2^{-n}}(a))\geq \frac{1}{4} \cdot 2^{-n+1}\geq \frac{1}{16}\cdot 2^{-2n}.$$

Hence $\mathcal{N}_a^{n}(\frac{1}{16},\frac{1}{2},2)=\{1,2,\dots , n \}$ for all $a\in \partial \Omega$, which implies $\mathcal{D}_W(\frac{1}{16},\frac{1}{2},2)>0$. By Theorem 5.2, we have $\alpha_l(\Omega)>0$.

On the other hand, we have $$C_l(E_{2^{-n}}(0)-\overline{D}(0,2^{-n-1}))=\frac{1}{4}\cdot 2^{-2(n+1)}.$$ Hence for any $\varepsilon >0$, there exists an integer $n_0$ such that $$C_l(E_{2^{-n}}(0)-\overline{D}(0,2^{-n-1})) < \varepsilon \cdot 2^{-n}$$ for every $n > n_0$. From Theorem 5.1, we know that $g_{\overline{\mathbb{C}}-E}(z,\infty)$ is not H\"older continuous at $0$. Suppose  on the contrary that there exists a function $\phi \in SH^{-}(\Omega) \cap C(\Omega)$ such that $-\phi \lesssim \delta^{\beta}$ for some $\beta>0$. It is easy to see from the maximum principle that
$$
g_{\overline{\mathbb{C}}-E}(z, \infty) \lesssim -\phi(z) \lesssim \delta^{\beta}(z) \leq |z-0|^{\beta}
$$
in a neighborhood of $0$, which is a contradiction.
\end{proof}

\end{document}